\newtheorem{theorem}{Theorem}
\numberwithin{theorem}{section}
\newtheorem{conjecture}[theorem]{Conjecture}
\newtheorem{corollary}[theorem]{Corollary}
\newtheorem{lemma}[theorem]{Lemma}
\newtheorem{proposition}[theorem]{Proposition}
\newtheorem{remark}[theorem]{Remark}
\theoremstyle{definition}
\newtheorem{definition}[theorem]{Definition}
\newtheorem{example}[theorem]{Example}
\newcommand{\R}{\mathbb{R}}
\newcommand{\N}{\mathbb{N}}
\DeclareMathOperator{\aff}{aff}
\DeclareMathOperator{\cone}{cone}
\DeclareMathOperator{\co}{co}
\DeclareMathOperator*{\dbigcup}{\dot \bigcup}
\DeclareMathOperator{\lin}{lin}
\DeclareMathOperator{\qri}{qri}
\DeclareMathOperator{\icr}{icr}
\DeclareMathOperator{\ri}{ri}
\DeclareMathOperator{\fri}{fri}
\DeclareMathOperator{\qi}{qi}
\newcommand{\proplhd}{%
  \mathrel{\ooalign{$\lneq$\cr\raise.22ex\hbox{$\lhd$}\cr}}}
\title{Face relative interior of convex sets in topological vector spaces}
\author{R. D\'iaz Mill\'an\thanks{Deakin University, Waurn Ponds, Australia} { and} Vera Roshchina\thanks{UNSW Sydney, Australia}}
\begin{document}

\maketitle

\begin{abstract}
A new notion of face relative interior for convex sets in topological real vector spaces is introduced in this work. Face relative interior is grounded in the facial structure, and may capture the geometry of convex sets in topological vector spaces better than other generalisations of relative interior. 

We show that the face relative interior partitions convex sets into face relative interiors of their closure-equivalent faces (different to the partition generated by intrinsic cores), establish the conditions for nonemptiness of this new notion, compare the face relative interior with other concepts of convex interior and prove basic calculus rules.
\end{abstract}

\section{Introduction}

There are several notions generalising the relative interior of a convex set in the Euclidean setting to real vector spaces. Perhaps the most well-known is the purely algebraic notion of intrinsic core defined for convex sets in arbitrary real vector spaces and the quasi-relative interior defined for convex sets in topological vector spaces. The intrinsic core has a deep and beautiful connection with the facial structure of convex sets, however, it may be empty even for `very large' sets (see our recent review \cite{mill-rosh} and references therein). In contrast to this, quasi-relative interior doesn't seem to have a neat interpretation in terms of faces, but its nonemptiness in practically useful settings (e.g. separable Banach spaces) appears to be more important for applications.

We argue that an alternative to the quasi-relative interior can be defined intrinsically in terms of faces while being nonempty under similar conditions. We call this new notion face relative interior. While it remains to be seen how useful this new generalisation may be for applications, some of the properties of face relative interior (that other notions do not satisfy) suggest that it may be a more natural choice than the quasi-relative interior in some settings, especially when one is concerned with the facial structure of convex sets in general topological vector spaces.

The \emph{face relative interior} of a convex set consists of all points for which the topological closure of their minimal face contains the entirety of this convex set. The face relative interior is sandwiched between the intrinsic core and the quasi-relative interior, and is different to both. We devote Section~\ref{sec:facereldef} to the proof of this fact and provide several examples that demonstrate the differences between the face-relative interior and other notions. 

Convex sets in separable Banach spaces have nonempty face-relative interiors (Corollary~\ref{cor:friBanach}). More generally, nonemptiness is guaranteed under general assumptions, similar to the well-known conditions for the nonemptiness of the quasi-relative interior. We discuss this in Section~\ref{sec:nonemptiness}, where we also systematise the results on a key notion of CS-closed sets in topological vector spaces that are otherwise scattered in the literature. 

Section~\ref{sec:calculus} is dedicated to exploring the calculus of face relative interiors, which resembles the calculus of quasi-relative interiors. Interestingly, we didn't need to invoke duality in our proofs.

Finally, in Section~\ref{sec:equiv} we argue that the face relative interior aligns more naturally with the facial structure of the convex set, compared to the quasi-relative interior. While any convex set is partitioned as the disjoint union of the intrinsic cores of its faces (see \cite{mill-rosh}), this is not true for the quasi-relative interior. However identifying faces that have the same closure we obtain a similar disjoint partition via the face relative interiors, based on the observation that faces that aren't closure-equivalent have non-intersecting face relative interiors. We show that this structural property is not available for the quasi-relative interiors: it may happen that faces with different closures have overlapping quasi-relative interiors (see Example~\ref{nonpartition}). We also show that the partition via face-relative interiors of faces is different to the partition via the intrinsic cores using explicit examples. 

We finish the paper with conclusions, where we also review some open questions and topics for future investigation.

\section{Face relative interior: definition and examples}\label{sec:facereldef}

The definition of face relative interior is based on the facial structure of the convex set. For a self-contained exposition, we briefly review the relevant ideas first.  We begin with a discussion on the facial structure of convex sets in real vector spaces, recap several useful results and definitions related to minimal faces, and then move on to defining the face relative interior, placing it in the context of other notions, and working out the conditions for its nonemptiness. 

\subsection{Notions of relative interior}

Faces are the main structural elements of convex sets: faces of a convex set form a lattice with respect to the set inclusion, and the facial structure is key for studying the geometry of convex sets.

For now, assume that $X$ is an arbitrary real vector space and $\N:=\{1,2,\dots\}$ denote the set of the natural numbers. Let $C\subseteq X$ be a convex set. A convex subset $F\subseteq C$ is called a {\em face} of $C$ if for every $x\in F$ and every $y,z \in C$ such that $x\in (y,z)$, we have $y,z\in F$.

The empty set is a face of $C$, and the set $C$ itself is its own face. Nonempty faces that don't coincide with $C$ are called proper. It is customary to write $F\unlhd C$ for a face $F$ of $C$. 

Given a subset $S$ of a convex set $C$, the smallest face, with respect to the inclusion, $F\unlhd C$ containing the set $S$, is called the minimal face of $S$ in $C$; in other words, $F$ is the intersection of all faces of $C$ that contain $S$. The minimal face is well-defined since any intersection of faces of a convex set is also a face. We denote the minimal face of a convex set $C$ containing $S\subset C$ by  $F_{\min}(S,C)$. When $S$ is a singleton ($S=\{x\}$ for some $x\in X$), we abuse the notation and write $F_{\min}(x,C)$ meaning $F_{\min}(\{x\},C)$. 

It was shown in \cite{mill-rosh} that the minimal face of a convex set $C$ containing $x\in X$ satisfies  
\begin{equation}\label{eq:frcharlin}
F_{\min} (x,C) = C\cap (\lin \cone (C-x)+x),
\end{equation}
where by $\lin K$ we denote the lineality space of a convex cone $K$: the largest linear subspace contained in $K$, and $\cone $ denotes the conic hull (for a convex set $C$ we have $\cone C = \R_{+} C = \{\alpha x\,|\, x\in C, \alpha \geq 0\}$). We abuse the notation and write $C+x$ for $C+\{x\}$, the Minkowski sum of the set $C$ and the singleton $\{x\}$. 

The intrinsic core (also known as pseudo-relative interior and the set of inner points) was studied extensively in the 50's (see our recent review \cite{mill-rosh} for more information). The intrinsic core  $\icr C$ of a convex set $C$ is a subset of $C$ such that $x\in \icr C$ if and only if for every $y\in C$ there exists $z\in C$ with $x\in (y,z)$.\label{page:deficr} 
One can also define the intrinsic core via minimal faces, \begin{equation}\label{eq:icrminface}
\icr C=\{x\in C\,|\, F_{\min}(x,C)=C\},
\end{equation}
and using the characterisation \eqref{eq:frcharlin}, 
\begin{equation}\label{eq:icrcone}
\icr C=\{x\in C\,|\, \cone(C-x) \text{ is a linear subspace} \}.
\end{equation}

The quasi-relative interior was introduced by Zarantonello in 1971 under the name of inner points, see \cite{Zarantonello1971ProjectionsOC}, but further developed and popularised by Borwein and  Lewis in 1992 \cite{BorweinLewisPartiallyFinite}. We also note that it was independently introduced in \cite{HadjisavvasSchaible} under the name of inner points. We would like to thank our referee for pointing out this important historical perspective.

Quasi-relative interior is a fairly well-studied notion, with many theoretical and practical applications. For instance, it was used in \cite{Dontchev} to formulate a Slater-like condition for the cone of nonnegative functions in $L^2$; in \cite{WeakEfficiency} the notion of quasi-relative interior is used to obtain necessary and sufficient conditions for the existence of saddle points of a Lagrangian function in a class of vector optimisation problems; in \cite{SetValuedSystems} the relation between linear separation and saddle points of Lagrangian functions in the context of variational inequalities is studied via the quasi-relative interior. Some other applications are discussed in \cite{InfDim, RemarksInf,Lagrange,Regularity,Subconvex,EfficientCoderivatives,MR3297972,MR3735852}. A detailed overview of Borwein and Lewis' work with examples and additional results is given in \cite{Lindstrom}. Zalinescu \cite{ZalinescuThreePb, ZalinescuOnTheUse} studied duality and separation in the context of quasi-relative interior and resolved a number of open questions related to the quasi-relative interior.  

Following \cite[Definition 2.6]{BorweinGoebel}, we define the {\it quasi-relative interior} $\qri C$ of a convex set $C\subseteq X$, where $X$ is a topological vector space, as 
\[
\qri C=\{x\in C: \overline{\cone}(C-x) \text{ is a linear subspace} \}.
\]

Some of the key advantages of quasi-relative interior over the intrinsic core is that quasi-relative interior is guaranteed to be nonempty under mild conditions (we discuss these in detail in Section~\ref{sec:nonemptiness}), and it has favourable properties when working with dual objects.  

Finally, we mention another generalisation of the relative interior, which is commonly referred to as the actual `relative interior', denoted by $\ri C$. This is the interior of $C$ relative to $\overline{\aff C}$, the closed affine hull of $C$. 

We are now ready to define the new notion of face relative interior. 

\begin{definition}[Face relative interior]\label{def:fri}
Let $C\subseteq X$ be a convex subset of a topological vector space $X$. We define the {\it face relative interior} of $C$ as
$$\fri C:=\{x\in C\,|\, C\subseteq \overline{F_{\min}(x,C)}\}.$$
\end{definition}

Our definition is analogous to the definition of the quasi-relative interior in that it also incorporates a topological closure. Curiously, all three key topological generalisations of the relative interior can be obtained by taking closures of appropriate objects, using three different but equivalent characterisations of the intrinsic core. The characterisations \eqref{eq:icrminface} and \eqref{eq:icrcone} correspond to face relative and quasi-relative interiors. Since the intrinsic core of a convex set $C$ can be seen as the core of $C$ with respect to its affine hull, relative interior can be seen as the `topological closure' of this characterisation. 

We will focus on the properties and calculus of the face relative interior in detail in Section~\ref{sec:calculus}. For now we note that the face relative interior is a convex set.

\begin{proposition}\label{prop:friconvex} Let $C$ be a convex subset of a topological vector space $X$. Then $\fri C$ is a convex subset of $C$.
\end{proposition}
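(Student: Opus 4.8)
The plan is to verify convexity directly from the definition, and the only ingredient needed is a monotonicity property of minimal faces along segments. Specifically, I will use the following elementary consequence of the definition of a face: if $x\in(y,z)$ with $y,z\in C$, then any face of $C$ containing $x$ must contain both $y$ and $z$; hence $F_{\min}(x,C)$ contains $y$, and since $F_{\min}(y,C)$ is by definition the smallest face of $C$ containing $y$, we obtain the inclusion $F_{\min}(y,C)\subseteq F_{\min}(x,C)$. (Alternatively this can be read off from the representation \eqref{eq:frcharlin}, since $\cone(C-x)\supseteq\cone(C-y)$ fails in general, so the face-theoretic argument is cleaner.)

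Now take $x_1,x_2\in\fri C$ and $\lambda\in[0,1]$, and set $x:=\lambda x_1+(1-\lambda)x_2$. If $\lambda\in\{0,1\}$ or $x_1=x_2$ there is nothing to prove, so assume $x\in(x_1,x_2)$. By the observation above applied with $y=x_1$, $z=x_2$, we get $F_{\min}(x_1,C)\subseteq F_{\min}(x,C)$, and since the topological closure is monotone with respect to set inclusion, $\overline{F_{\min}(x_1,C)}\subseteq\overline{F_{\min}(x,C)}$. Because $x_1\in\fri C$, Definition~\ref{def:fri} gives $C\subseteq\overline{F_{\min}(x_1,C)}$, and combining the two inclusions yields $C\subseteq\overline{F_{\min}(x,C)}$, i.e.\ $x\in\fri C$. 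Finally, $\fri C\subseteq C$ holds trivially from the definition, so $\fri C$ is a convex subset of $C$.

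I do not anticipate a genuine obstacle here: the argument reduces to the monotonicity of minimal faces under passing to a point of an open segment, which is immediate from the face axioms, together with the fact that closures preserve inclusions. It is worth noting that the same reasoning in fact shows slightly more — for any $x\in\fri C$ and any $y\in C$ the half-open segment $[x,y)$ is contained in $\fri C$ — which one may wish to record if it is useful later, but for the present statement the displayed convexity argument suffices. The one point to be careful about is handling the degenerate cases ($\lambda\in\{0,1\}$ and $x_1=x_2$) separately, as above, so that one may legitimately write $x\in(x_1,x_2)$.
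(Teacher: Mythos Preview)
Your proof is correct and follows essentially the same route as the paper's: both arguments pick a point on the open segment between two points of $\fri C$, use the face axiom to conclude that the endpoints lie in the minimal face of the midpoint (hence the minimal face of an endpoint is contained in that of the midpoint), and then pass to closures. Your added remark that the argument in fact yields $[x,y)\subseteq\fri C$ for any $x\in\fri C$ and $y\in C$ is exactly what the paper records later as Proposition~\ref{prop:basic}\ref{it:segmentfri}.
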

\begin{proof}
Suppose that $x,y\in\fri C$ and $z\in (x,y)$. Since $C$ is convex, $z\in C$. Since $z\in (x,y)$, we must have $x,y\in F_{\min}(z,C)$ by the definition of a face. Since $x\in \fri C$, it follows that 
\[
C \subseteq \overline{F_{\min}(x,C)} \subseteq \overline{F_{\min}(z,C)},
\]
proving that $z \in \fri C$. Then $\fri C$ is a convex subset of $C$.
\end{proof}

\subsection{The sandwich theorem}

The face relative interior is sandwiched between the intrinsic core and quasi-relative interior, as we show next in Theorem~\ref{thm:interiorsandwich}. We prove this theorem first and then proceed with examples that show face relative interior to be different from the other three notions that we discussed (intrinsic core, quasi-relative and relative interiors). 

\begin{theorem}[Sandwich]\label{thm:interiorsandwich} Let $C$ be a convex subset of a topological vector space $X$. Then 
\begin{equation}\label{eq:mainsandwich}
\ri C \subseteq \icr C \subseteq \fri C \subseteq \qri C.
\end{equation} 
\end{theorem}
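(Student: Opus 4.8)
The plan is to establish the chain in \eqref{eq:mainsandwich} by verifying each of the three inclusions separately, working from the characterisations already available in the excerpt.

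\textbf{The inclusion $\ri C \subseteq \icr C$.} This is the classical fact that the topological relative interior is contained in the algebraic one. I would argue directly from the definitions: if $x\in\ri C$, then $x$ is an interior point of $C$ relative to $\overline{\aff C}$, so some neighbourhood $U$ of $x$ in $\overline{\aff C}$ lies in $C$. Given any $y\in C$, the point $x$ lies on the open segment from $y$ to a point $z$ obtained by moving from $x$ slightly away from $y$ along the line through $x$ and $y$ (which is contained in $\aff C\subseteq\overline{\aff C}$); for a small enough step this $z$ lies in $U\subseteq C$. Hence $x\in\icr C$ by the defining property of the intrinsic core recalled on page~\pageref{page:deficr}. (If $C=\emptyset$ the statement is vacuous; the one-point and lower-dimensional cases are handled by the same argument inside $\overline{\aff C}$.)

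\textbf{The inclusion $\icr C \subseteq \fri C$.} This one is essentially immediate from the face-theoretic descriptions. If $x\in\icr C$, then by \eqref{eq:icrminface} we have $F_{\min}(x,C)=C$, hence trivially $C = F_{\min}(x,C)\subseteq\overline{F_{\min}(x,C)}$, which is exactly the condition defining $\fri C$ in Definition~\ref{def:fri}. So $x\in\fri C$.

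\textbf{The inclusion $\fri C \subseteq \qri C$.} Here I would use the cone characterisation \eqref{eq:frcharlin} of the minimal face together with the definition of $\qri C$. Let $x\in\fri C$, so $C\subseteq\overline{F_{\min}(x,C)}$ with $F_{\min}(x,C) = C\cap(\lin\cone(C-x)+x)$. Write $L:=\lin\cone(C-x)$, a linear subspace. From $F_{\min}(x,C)\subseteq L+x$ we get $F_{\min}(x,C)-x\subseteq L$, and since $L$ is a linear subspace it is in particular a convex cone, so $\cone(F_{\min}(x,C)-x)\subseteq L$; taking closures, $\overline{\cone}(F_{\min}(x,C)-x)\subseteq\overline L$. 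On the other hand $L=\lin\cone(C-x)\subseteq\cone(C-x)$, and $\cone(C-x)$ is built from a larger set than $\cone(F_{\min}(x,C)-x)$, so I must instead pass through the closure of $C$: from $C\subseteq\overline{F_{\min}(x,C)}\subseteq\overline{L+x}$ we obtain $C-x\subseteq\overline L$, hence $\cone(C-x)\subseteq\overline L$ (as $\overline L$ is a closed convex cone), and therefore $\overline{\cone}(C-x)\subseteq\overline L$. Since $L$ is a linear subspace, so is $\overline L$ (the closure of a linear subspace in a topological vector space is a linear subspace); and the reverse inclusion $L\subseteq\overline{\cone}(C-x)$ is clear because $L=\lin\cone(C-x)\subseteq\cone(C-x)\subseteq\overline{\cone}(C-x)$. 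Thus $\overline{\cone}(C-x)$ is sandwiched between the linear subspace $L$ and its closure $\overline L$, both of which are linear subspaces, forcing $\overline{\cone}(C-x)=\overline L$ to be a linear subspace. Hence $x\in\qri C$.

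\textbf{Main obstacle.} The two easy inclusions are bookkeeping; the delicate one is $\fri C\subseteq\qri C$, where the point is to transfer the containment $C\subseteq\overline{F_{\min}(x,C)}$ into a statement about $\overline{\cone}(C-x)$ and then recognise it as a linear subspace. The subtlety is that one should \emph{not} try to relate $\cone(C-x)$ and $\cone(F_{\min}(x,C)-x)$ directly (the former comes from a bigger set), but rather use the closure of the minimal face, exploit that $F_{\min}(x,C)-x$ sits inside the linear subspace $L=\lin\cone(C-x)$, and use that the closure of a linear subspace is again a linear subspace in any topological vector space. Once the two-sided sandwich $L\subseteq\overline{\cone}(C-x)\subseteq\overline L$ is in place, it is automatic that the middle term is a linear subspace.
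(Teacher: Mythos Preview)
Your proposal is correct and follows essentially the same route as the paper: the first inclusion is classical (the paper simply cites it), the second is the one-line observation via \eqref{eq:icrminface}, and for $\fri C\subseteq\qri C$ both you and the paper pass through $L:=\lin\cone(C-x)$, use $C\subseteq\overline{F_{\min}(x,C)}\subseteq\overline{L+x}$ together with translation invariance to get $C-x\subseteq\overline L$, and conclude $\overline{\cone}(C-x)=\overline L$. One small tightening: the sandwich $L\subseteq\overline{\cone}(C-x)\subseteq\overline L$ does not by itself force the middle term to be a linear subspace; the missing word is that $\overline{\cone}(C-x)$ is \emph{closed}, so $L\subseteq\overline{\cone}(C-x)$ already gives $\overline L\subseteq\overline{\cone}(C-x)$ and hence equality.
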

\begin{proof} The first inclusion $\ri C\subseteq \icr C$ is well-known (e.g. see \cite[Theorem~2.12]{BorweinGoebel}). 

Suppose that $C\subseteq X$ is convex, and let $x\in \icr C$. By \eqref{eq:icrminface} we have $F_{\min }(x,C) = C$, and hence 
\[
C \subseteq \overline{F_{\min}(x,C)}.
\]
By the definition of face relative interior, we conclude that $x\in \fri C$. We have therefore shown that $\icr C \subseteq \fri C$. 

It remains to demonstrate that $\fri C \subseteq \qri C$. Let $x\in \fri C$. Then 
\[
C \subseteq \overline{F_{\min}(x,C)}.
\]

From \eqref{eq:frcharlin} we have 
\begin{equation}\label{eq:345566}
C \subseteq \overline{F_{\min}(x,C)} = \overline {C\cap (\lin \cone (C-x)+x)}\subseteq  \overline {\lin \cone (C-x)+x}.
\end{equation}
In a topological vector space, the topology is translation invariant (see \cite[Section~5.1]{hitchhiker}), for any $A\subset X$ and $x\in X$ we have $\overline{A+x}{\color{blue}=} \overline{A}+x$.

Then we have that $ \overline {\lin \cone (C-x)+x}{\color{blue}=}\overline {\lin \cone (C-x)}+x$, so subtracting $x$ from both sides of the inclusion \eqref{eq:345566} we have
\begin{equation}\label{eq:technical045}
C - x\subseteq \overline {\lin \cone (C-x)},    
\end{equation}
and since $\overline{\lin \cone (C-x)}$ is a linear subspace of $X$, we have $\cone \overline{\lin \cone (C-x)} = \overline{\lin \cone (C-x)}$, using \eqref{eq:technical045} we obtain
\[
\overline{\cone (C - x)}\subseteq \overline {\cone \overline {\lin \cone (C-x)}} = \overline {\lin \cone (C-x)}.
\]
Since  $\lin \cone (C-x)\subseteq \cone (C-x)$, this yields 
\[
\overline {\lin \cone (C-x)} = \overline{\cone (C - x)},
\]
hence, $\overline{\cone (C - x)}$ is a linear subspace, and by the definition of quasi-relative interior, we have $x\in \qri C$. We conclude that $\fri C \subseteq \qri C$. 
\end{proof}

\begin{remark}If $X$ is finite-dimensional, or $\ri C\neq \emptyset$, we have $\ri C = \qri C$, and hence all four notions coincide (see \cite[Theorem~2.12]{BorweinGoebel}). Note however that $\icr C \neq \emptyset$ doesn't guarantee that $\icr C = \fri C$ (and hence that $\icr C = \qri C$) ( see Example~\ref{eg:icrneqfri}). 
\end{remark}

A consequence of Theorem \ref{thm:interiorsandwich} is the following result about minimal faces. 

\begin{corollary}\label{lem:eachpointinqri} Let $C$ be a convex subset of a topological vector space $X$. Then for any $x\in C$ we have $x\in \icr F_{\min}(x,C)\subseteq \fri F_{\min}(x,C)\subseteq \qri F_{\min}(x,C)$.
\end{corollary}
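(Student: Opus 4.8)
The plan is to deduce Corollary~\ref{lem:eachpointinqri} by applying Theorem~\ref{thm:interiorsandwich} (specifically its middle inclusions $\icr \subseteq \fri \subseteq \qri$) to the convex set $F_{\min}(x,C)$ in place of $C$. The set $F_{\min}(x,C)$ is a face of $C$, hence convex, and it is a subset of the topological vector space $X$, so the sandwich theorem applies verbatim to it: we get $\icr F_{\min}(x,C) \subseteq \fri F_{\min}(x,C) \subseteq \qri F_{\min}(x,C)$. Thus the only thing left to establish is the very first membership, namely $x \in \icr F_{\min}(x,C)$.

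To prove $x \in \icr F_{\min}(x,C)$, I would use the characterisation \eqref{eq:icrminface}, which says $\icr D = \{y \in D \mid F_{\min}(y,D) = D\}$. Applying this with $D = F_{\min}(x,C)$, it suffices to show that $F_{\min}(x, F_{\min}(x,C)) = F_{\min}(x,C)$. This is a general fact about minimal faces: the minimal face of a convex set $D$ containing a point $y \in D$ is its own minimal face of $y$, because $D$ is trivially the smallest face of $D$ containing $y$ that equals all of $D$ — more carefully, any face of $F_{\min}(x,C)$ is also a face of $C$ (faces of faces are faces, which follows directly from the definition), so a face of $F_{\min}(x,C)$ containing $x$ would be a face of $C$ containing $x$ and hence contains $F_{\min}(x,C)$ by minimality; therefore $F_{\min}(x,C)$ is the smallest face of itself containing $x$. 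Hence $x \in \icr F_{\min}(x,C)$.

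Combining these two observations gives the chain $x \in \icr F_{\min}(x,C) \subseteq \fri F_{\min}(x,C) \subseteq \qri F_{\min}(x,C)$, which is exactly the statement. I expect there is no real obstacle here; the only mildly delicate point is the transitivity of the face relation (a face of a face of $C$ is a face of $C$), which may have been recorded earlier in the paper or in \cite{mill-rosh}, but in any case follows immediately from unwinding the definition of face. If the paper prefers, one could instead cite the partition result that $C = \dot\bigcup_{F \unlhd C} \icr F$ from \cite{mill-rosh}, which encodes precisely that $x \in \icr F_{\min}(x,C)$, but the direct argument via \eqref{eq:icrminface} is shorter and self-contained.
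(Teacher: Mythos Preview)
Your proposal is correct and follows essentially the same route as the paper: apply the sandwich theorem to the convex set $F_{\min}(x,C)$ and observe that $x\in\icr F_{\min}(x,C)$ via $F_{\min}(x,F_{\min}(x,C))=F_{\min}(x,C)$. The only cosmetic difference is that the paper cites \cite[Corollary~3.14]{mill-rosh} directly for the membership $x\in\icr F_{\min}(x,C)$, whereas you unwind it from \eqref{eq:icrminface} together with transitivity of the face relation---an option you yourself note at the end.
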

\proof
Since $F_{\min}(x,C)=F_{\min}\left(x,F_{\min}(x,C)\right)$, and by \cite[Corollary~3.14]{mill-rosh} the intrinsic core of a minimal face contains $x$ and is hence nonempty, we have 
\[
x\in\icr F_{\min}(x,C)\subseteq \fri F_{\min}(x,C)\subseteq \qri F_{\min}(x,C).
\]
\endproof

Our next goal is to show that the face relative interior is a new notion that doesn't coincide with neither the intrinsic core nor the quasi-relative interior. We consider three examples: the first one is Example~\ref{eg:icrneqfri} that gives a simple construction of a convex set in $l_2$ for which all notions are different, while the intrinsic core is nonempty. The remaining three examples are well-known from the works \cite{BorweinGoebel} and \cite{BorweinLewisPartiallyFinite}. In Example~\ref{eg:02a} the intrinsic core is empty, but the face relative interior isn't; in Example~\ref{exa1} the face relative interior coincides with the intrinsic core, but is a proper subset of the quasi-relative interior. 

\begin{example}\label{eg:icrneqfri} Let $L=l_1$, and embed $L$ into $l_2$ as a subset. Take any point $u\in l_2\setminus l_1$ and let $C= \co (L\cup \{u\})$. 

Every point $x\in C$ can be uniquely represented as $x= (1-\alpha) l + \alpha u$, with $l\in L$. If $\alpha = 1$, then $x=u$, and $F_{\min}(x,C) =F_{\min} (u,C) = \{u\}$. If $\alpha = 0$, then $x\in L$, and $F_{\min}(x,C) = L$. 
Otherwise, for $\alpha \in (0,1)$, for any $y= (1-\beta)l + \beta u\in C$ let $0<\gamma < \min\left\{1,\frac{\alpha}{|\alpha - \beta|}\right\}$, then for a sufficiently small positive value of $\gamma$ 
\[
z: = \left(1-\frac{\alpha - \gamma \beta}{1-\gamma}\right) l + \frac{\alpha - \gamma \beta} {1-\gamma} u \in C,
\]
and hence we have 
\[
x = (1-\gamma) z + \gamma y,
\]
meaning that $y\in F_{\min}(x,C)$ for every $y\in C$. We conclude that  $\icr C = C\setminus (L \cup \{u\})$.

Since for every point $x\in L$ the minimal face is $L$ itself, and the closure of $L = l_1$ in $l_2$ is the entirety of $l_2$, this means $L\subset \fri C$. Notice though that $u\notin \fri C$, hence $\fri C = C\setminus \{u\}$. At the same time,  $\qri C = C$. 

Finally, observe that $\ri C = \emptyset$ because $\aff C = \R u + l_1$ (and so $\aff C \neq l_2$), therefore all inclusions of \eqref{eq:mainsandwich} are strict (we thank our referee for this suggestion).
    
\end{example}

In the next examples, our calculations are done directly from definitions, which may help further familiarise the reader with our ideas. We will need a couple of technical results to proceed: the first one (Proposition~\ref{prop:minfaceunion}) is a characterisation of a minimal face that was proved in \cite{mill-rosh}, and the second one is a generalisation of the delicate construction of slowly converging sequences from \cite{BorweinGoebel}.

\begin{proposition}\label{prop:minfaceunion} The minimal face $F_{\min}(x,C)$ for $x\in C$, where $C$ is convex, can be represented as
\begin{equation}\label{minface:union}
F_{\min}(x,C) = \bigcup \{ [y,z]\subseteq C, \, x\in (y,z)\}.
\end{equation}
\end{proposition}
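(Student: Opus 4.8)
The plan is to prove the two inclusions separately, using the definition of a face together with the characterisation of the minimal face via line segments. Write $G := \bigcup \{[y,z] : [y,z]\subseteq C,\ x\in (y,z)\}$ for the right-hand side, with the convention that the union is nonempty because $x\in C$ means $x\in[x,x]$ and $x\in(x,x)$ vacuously fails — so I should be slightly careful and instead observe that $[x,x]=\{x\}$ should be included by taking the degenerate segment, or simply argue $x\in G$ directly since $x\in(y,z)$ for $y=z=x$ is problematic; the cleaner fix is to note that $x$ lies in $F_{\min}(x,C)$ trivially and show $x\in G$ by picking any segment through $x$, or adopt the convention that degenerate segments are allowed. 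I would state this convention once at the start.

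First I would show $G\subseteq F_{\min}(x,C)$. Since $F_{\min}(x,C)$ is a face of $C$ containing $x$, for any segment $[y,z]\subseteq C$ with $x\in(y,z)$ the defining property of a face forces $y,z\in F_{\min}(x,C)$, and then by convexity of $F_{\min}(x,C)$ the whole segment $[y,z]$ lies in $F_{\min}(x,C)$. Taking the union over all such segments gives $G\subseteq F_{\min}(x,C)$. This direction is routine.

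The substantive direction is $F_{\min}(x,C)\subseteq G$, and since $F_{\min}(x,C)$ is the smallest face containing $x$, it suffices to show that $G$ is itself a face of $C$ that contains $x$; minimality then yields $F_{\min}(x,C)\subseteq G$. Containment of $x$ is immediate (degenerate segment, or the convention above). To see $G$ is a face, take $w\in G$ and $u,v\in C$ with $w\in(u,v)$; I must show $u,v\in G$. By definition of $G$ there is a segment $[y,z]\subseteq C$ with $x\in(y,z)$ and $w\in[y,z]$. The key geometric step is to build a new segment $[u',v']\subseteq C$ that contains both $x$ and $u$ (and similarly one containing $x$ and $v$), witnessing $u\in G$ and $v\in G$: roughly, one takes the two-dimensional convex hull $\co\{x,u,v\}\cup[y,z]$ inside $C$ and finds, by a planar argument, a chord through $x$ that reaches past $u$. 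I would also need to check convexity of $G$ (needed so that $G$ qualifies — faces are required to be convex): given two points of $G$, each on a segment through $x$, their convex combination lies on a segment obtained by a similar planar construction through $x$.

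The main obstacle is precisely this planar (dimension $\le 2$) construction: given that $x$ lies strictly between $y$ and $z$, that $w$ lies on $[y,z]$, and that $w$ lies strictly between $u$ and $v$ in $C$, produce an explicit segment in $C$ through $x$ whose relative interior reaches beyond $u$. Concretely I would parametrise points on the line through $x$ and $u$, use that $w\in(u,v)$ to push slightly past $u$ while staying in the triangle $\co\{u,v,y\}$ or $\co\{u,v,z\}$ (one of the two will contain a neighbourhood of $x$ along the right direction since $x\in(y,z)$), and invoke convexity of $C$ to keep everything inside $C$. Since this is exactly the content already established in \cite{mill-rosh}, in the paper itself one may simply cite it; but if a self-contained argument is wanted, this elementary two-dimensional case analysis is where the real work lies. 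Everything else — the two inclusions and the convexity bookkeeping — is formal.
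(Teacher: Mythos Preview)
The paper does not prove this proposition at all: it is quoted from \cite{mill-rosh} and used as a tool. So there is no in-paper argument to compare against; what you have written is a self-contained replacement for that citation.

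Your outline is essentially correct. The inclusion $G\subseteq F_{\min}(x,C)$ is indeed routine. For the reverse inclusion, showing that $G$ is a face of $C$ works, and the ``planar'' step you flag as the main obstacle goes through exactly as you indicate: if $w\in G$ with witness $w'\in C$, $x\in(w,w')$, and $w\in(u,v)$ with $u,v\in C$, then $x=\mu\beta u+\mu(1-\beta)v+(1-\mu)w'$ is a strict convex combination, so for small $\varepsilon>0$ the point $x+\varepsilon(x-u)$ is still a convex combination of $u,v,w'$ and hence lies in $C$, giving $u\in G$. The convexity of $G$ is handled by the same kind of explicit barycentric computation. Your observation about the degenerate case (when $x$ is extreme, the union as literally written is empty while $F_{\min}(x,C)=\{x\}$) is a genuine edge case of the stated formula and should be handled by convention, as you say.

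A shorter route, available once one accepts the characterisation \eqref{eq:frcharlin} already recorded in the paper, is to note that for $y\in C$ one has $y\in C\cap(\lin\cone(C-x)+x)$ if and only if $x-y\in\cone(C-x)$, i.e.\ $x-y=\alpha(c-x)$ for some $c\in C$ and $\alpha\ge 0$; for $\alpha>0$ this rearranges to $x\in(y,c)$, and $\alpha=0$ gives $y=x$. This bypasses the need to verify directly that $G$ is a face, at the cost of relying on \eqref{eq:frcharlin}. Your approach is more elementary in that it uses only the definition of a face, which is a reasonable trade-off.
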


The next statement is evident from Proposition~\ref{prop:minfaceunion}.

\begin{corollary}\label{cor:minfsubset}
If $A$ and $B$ are convex subsets of a vector space $X$, $A\subseteq B$ and $x\in A$, then
\[
F_{\min}(x,A)\subseteq F_{\min}(x,B).
\]
\end{corollary}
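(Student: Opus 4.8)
The plan is to read off the inclusion directly from the union representation in Proposition~\ref{prop:minfaceunion}. Fix an arbitrary point $w\in F_{\min}(x,A)$; the goal is to show $w\in F_{\min}(x,B)$. By \eqref{minface:union} applied to the convex set $A$, the point $w$ belongs to some segment $[y,z]$ with $[y,z]\subseteq A$ and $x\in(y,z)$. Since $A\subseteq B$, the very same segment satisfies $[y,z]\subseteq B$ with $x\in(y,z)$, so it is one of the segments whose union on the right-hand side of \eqref{minface:union}, now written for $B$, gives $F_{\min}(x,B)$. Hence $[y,z]\subseteq F_{\min}(x,B)$, and in particular $w\in F_{\min}(x,B)$. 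As $w$ was arbitrary, $F_{\min}(x,A)\subseteq F_{\min}(x,B)$.

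The only point requiring a little care is the degenerate situation in which no nondegenerate segment through $x$ inside $A$ contains $w$ — this happens precisely when $x$ is an extreme point of $A$, forcing $F_{\min}(x,A)=\{x\}$ and $w=x$. In that case one simply observes that $x\in A\subseteq B$ and $x\in F_{\min}(x,B)$ by the definition of a minimal face, so again $w\in F_{\min}(x,B)$. Handling this edge case is the closest thing to an obstacle, and it is entirely routine.

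An alternative, equally short route avoids the union characterisation altogether: one first checks that if $F\unlhd B$ then $F\cap A\unlhd A$ (for $u\in F\cap A$ and $y,z\in A$ with $u\in(y,z)$, we have $y,z\in B$, so $y,z\in F$, and also $y,z\in A$). Applying this to $F=F_{\min}(x,B)$ shows $F_{\min}(x,B)\cap A$ is a face of $A$ containing $x$, whence by minimality $F_{\min}(x,A)\subseteq F_{\min}(x,B)\cap A\subseteq F_{\min}(x,B)$. Either way the statement is immediate, consistent with its billing as evident from Proposition~\ref{prop:minfaceunion}.
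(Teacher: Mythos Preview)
Your proof is correct and follows precisely the approach the paper indicates: the corollary is stated there without proof as ``evident from Proposition~\ref{prop:minfaceunion}'', and your main argument spells out exactly that inference from the union representation~\eqref{minface:union}, with the extreme-point edge case handled cleanly. The alternative face-intersection route you sketch is also valid but goes beyond what the paper invokes.
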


\begin{proposition}\label{prop:majorant} Let $X = l^d$ with $d\in [1,+\infty)$. Then for any $x\in X$ and any $\delta>0$ there exists $y\in l^d$ such that $\|y\|_d =  \delta$  and for all $\varepsilon>0$ there is an $i \in \N$ with  
\[
|x_i|<\varepsilon y_i.
\]
\end{proposition}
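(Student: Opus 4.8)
The plan is to build $y$ by hand, exploiting the gap between being $d$-summable and being small entrywise. The only property of $x\in l^d$ that I shall use is $\sum_{i}|x_i|^d<\infty$, which forces $|x_i|\to 0$ as $i\to\infty$. So first I would fix a strictly increasing sequence of indices $(i_k)_{k\in\N}$ with $i_1\geq 2$ and $|x_{i_k}|\leq 2^{-k}$ for every $k$; this is possible precisely because $|x_i|\to 0$.

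On these indices I want $y$ to decay slowly enough that $y_{i_k}$ dominates $|x_{i_k}|$, yet fast enough that $y\in l^d$. The choice $y_{i_k}=2^{-k/(2d)}$ achieves both: raising to the power $d$ produces the geometric series $\sum_k 2^{-k/2}<\infty$, while $|x_{i_k}|/y_{i_k}\leq 2^{-k}/2^{-k/(2d)}=2^{-k(1-1/(2d))}\to 0$ since $1-1/(2d)>0$ for every $d\geq 1$. Setting $y_j=0$ for $j\notin\{i_k:k\in\N\}$ gives $y\in l^d$ with $0<\|y\|_d<\infty$, and replacing $y$ by $(\delta/\|y\|_d)\,y$ yields a sequence of norm exactly $\delta$; rescaling multiplies each ratio $|x_{i_k}|/y_{i_k}$ by a fixed positive constant, so these ratios still tend to $0$. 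Then, given $\varepsilon>0$, I choose $k$ large enough that $|x_{i_k}|/y_{i_k}<\varepsilon$ and take $i=i_k$ (note $y_{i_k}>0$), obtaining $|x_i|<\varepsilon y_i$. If one prefers $y$ to have all coordinates strictly positive — which is how such ``slowly converging majorants'' are usually phrased — I would split the norm budget, placing $(\delta/2)^d$ of the $l^d$-mass on the slow bumps $y_{i_k}$ above and the remaining $(\delta/2)^d$ on a fast geometric sequence $c\,2^{-j/d}$ over the indices $j\notin\{i_k\}$, which does not affect the limit along $(i_k)$.

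The argument is elementary; the one point requiring care is calibrating the two decay rates so that $y$ lies in $l^d$ \emph{and} simultaneously majorises $x$ entrywise along a subsequence. This is exactly where the strict inequality $1/(2d)<1$ (equivalently $d<\infty$) enters, and it is the reason the analogous statement has no chance in $l^\infty$. A minor subtlety is that some coordinates of $x$ may vanish, but this only helps: since $y_{i_k}>0$, the inequality $|x_{i_k}|=0<\varepsilon y_{i_k}$ then holds automatically.
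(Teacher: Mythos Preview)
Your proof is correct and follows essentially the same approach as the paper: both exploit $|x_i|\to 0$ to select a strictly increasing sequence of indices where $x$ is small, and then place the mass of $y$ on those indices with a decay rate slow enough to dominate $x$ there yet fast enough to remain in $l^d$. The paper's version avoids your rescaling step by fixing in advance an arbitrary positive $z\in l^d$ with $\|z\|_d=\delta$ and redistributing its entries to the chosen indices (so the norm is $\delta$ by construction), but the underlying idea is identical.
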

\begin{proof} Fix an $x\in X = l^d$ and $\delta>0$. Let $z\in X$ be such that $z_k>0$ for every $k\in \N$ and $\|z\|_d = \delta$. We define a function $N(k)$ recursively as follows. Since $|x_n|\to 0$, for any $k\in \N$ we can find $N'(k)$ such that
\[
|x_n|<\frac{z_k}{k}\quad \forall n \geq N'(k).
\]
Let $N(1) = N'(1)$, and define the remaining values of the function $N(k)$ recursively, as 
\[
N(k) = \max\{N'(k), N(k-1)+1\}.
\]

Observe that $N(k)$ is injective by construction. We define the sequence $y\in X$ such that for each $n\in \N$
\[
y_n: = \begin{cases}
0,& n\notin N(\N),\\
z_k, & N(k) = n. 
\end{cases}
\]
Note that $\|y\|_d = \|z\|_d = \delta$. 

Now choose any $\varepsilon>0$. There exists $k\in \N$ such that $\varepsilon >1/k$. Since 
\[
|x_n|< \frac{z_k}{k}\quad \forall n \geq N(k)\geq N'(k),
\]
we conclude that for $i=N(k)$
\[
|x_i|< \frac{z_k}{k}\leq \varepsilon y_{i}.
\]
\end{proof}

The next example shows that the intrinsic core is different to the face relative interior. More precisely, we demonstrate that it is possible to have $\emptyset = \icr C \subsetneq \fri C$.

\begin{example}[{From \cite[Example 2.4]{BorweinGoebel}}]\label{eg:02a} Let $C$ be the following subset of the Hilbert space $l_2$,
$$
C = \{x\in l_2\,|\, \|x\|_2\leq 1, x_i\geq 0\; \forall\, i \in \N\}.
$$
We will demonstrate that 
\begin{equation}\label{eq:egicrnotfri}
\emptyset = \icr C \neq \fri C,
\end{equation}
moreover,
\begin{equation}\label{eq:friexample1}
\fri C = \{x\in C\, |\, \|x\|_2<1, x_i >0 \, \forall i \in \N\}.
\end{equation}

Our first step is to show that $\icr C =\emptyset$, which was proved in \cite[Example 2.4]{BorweinGoebel}, but we would like to provide a different proof based on \eqref{eq:icrminface}. We will prove this by showing that for any $x\in C$  there is some $y\in C\setminus F_{\min}(x,C)$, and hence from \eqref{eq:icrminface} it follows that $\icr C = \emptyset$. Take any $x\in C$. By Proposition~\ref{prop:majorant} there exists an $y$ such that $\|y\|_2 = 1$ and for any $\varepsilon>0$ there is an $i\in \N$ with $ \varepsilon y_i >|x_i|$. It is clear that $y\in C$. If $y\in F_{\min}(x,C)$, then from Proposition~\ref{prop:minfaceunion} we know that there exists $z\in C$ such that $x\in (y,z)$. Explicitly we have $x = \alpha y + (1-\alpha) z$, $\alpha \in (0,1)$, and so
\[
z = \frac{1}{1-\alpha} (x - \alpha y).
\]
By our construction of $y$, for $\varepsilon=\alpha$ there is some $i\in \N$ such that $x-\alpha y_i<0$, and hence  
\[
z_i = \frac{1}{1-\alpha}(x_i - \alpha y_i) <0,
\]
which contradicts $z\in C$, and hence $y\notin F_{\min}(x,C)$, and we have hence shown that $\icr C = \emptyset$. 

Next, we are going to calculate the minimal faces of all $x\in C$ explicitly. We already know that for any $x\in C$ the set $F_{\min}(x,C)$ does not contain any sequences that converge faster than $x$ (in an appropriate mathematical sense, explicated in Proposition~\ref{prop:majorant}). Now any other $y\in C$ is such that there exists some $\varepsilon>0$ with $\varepsilon y_i\leq x_i$ for all $i\in \N$. We let 
\[
z_\alpha = x+ \alpha(x-y) = \alpha x + x-\alpha y.
\]
and so for $\alpha \leq \varepsilon$
\[
z_i = \alpha x_i + x_i - \alpha y_i \geq \alpha x_i \geq 0.
\]
If $\|x\|_2<1$, we can choose $\alpha\leq \varepsilon$ such that 
\[
\|z_\alpha\|_2\leq \|x\|_2 + \alpha \|x-y\|_2 \leq 1,
\]
and hence $x\in (y,z_\alpha)$ with $y,z_\alpha \in C$. 

We conclude that
\begin{equation}\label{eq:minface001}
F_{\min}(x,C) = \{y\in C\,|\, \exists \varepsilon >0, |x_i|\geq \varepsilon |y_i| \; \forall \, i \in \N\} \quad \forall x\in C, \; \|x\|_2 <1.
\end{equation}

It remains to consider the case $\|x\|_2=1$. We will show that in this case, the minimal face of $x$ is the singleton $\{x\}$. Indeed, assume on the contrary that $x\in (x+u,x-u)$, where $x-u,x+u\in C$ and $u\neq 0$. Then using the parallelogram law and the fact that $\|x\|=1$, we have 
\[
2 = 2 \|x\|_2^2 \leq 2 \|x\|_2^2+2 \|u\|_2^2 = \|x+u\|_2^2 + \|x-u\|_2^2 \leq 2,  
\]
and we conclude that $u = 0$. Hence
\begin{equation}\label{eq:minface002}
F_{\min}(x,C) = \{x\} \quad \forall x\in C, \|x\|_2=1.
\end{equation}

It follows from \eqref{eq:minface001} that if for $x\in C$ we  have $x_i =0$ for some $i\in \N$, then 
\[
F_{\min}(x,C) \subseteq \{u\, |\, u_i = 0\},
\]
and hence $\overline{F_{\min}(x,C)} \neq C$. Likewise, when $\|x\|_2=1$, we have $\overline{F_{\min}(x,C)} =\{x\} \neq C$. On the other hand, if $x\in C$ is such that $\|x\|_2<1$ and $x_i >0$ for all $i\in \N$, representing any $y\in C$ as the limit of eventually zero sequences $y^k$ with the first $k$ entries coinciding with the first $k$ entries of $y$, we have $\bar y^k \in F_{\min}(x,C)$ and since $\bar y^k \to y$, we conclude that $y\in \overline{F_{\min}(x,C)}$. By the arbitrariness of $y$, we conclude that $x\in \fri C$, and therefore we have \eqref{eq:friexample1}. The rest of the relation \eqref{eq:egicrnotfri} also follows from this representation. 
\end{example}

The next example demonstrates that the face relative interior is different to the quasi-relative interior.

\begin{example}{\cite[Example~2.20]{BorweinLewisPartiallyFinite}}\label{exa1} Let
$$
C:= \{x\in l_2\,|\, \|x\|_1 \leq 1\}.
$$
It was shown in \cite{BorweinLewisPartiallyFinite} that 
\begin{equation}\label{eq:qriEx1}
\qri C = C \setminus \{x\in l_2\,|\, \|x\|_1=1 , \; x_n = 0 \; \forall n> \text{some } N\}.
\end{equation}

We will calculate the face relative interior of $C$ explicitly and show that $\fri C\subsetneq \qri C$.

Observe that for every $x\in C$ such that $\|x\|_1<1$ we have $F_{\min}(x,C)= C$. Indeed, take any $y\in C\setminus \{x\}$, and let 
\[
z = x + \alpha (x-y), \quad \text{where } \; \alpha = \frac{1-\|x\|_1}{\|y-x\|_1}.
\]
Then $z\in l_1$,
\[
\|z\|_1 \leq \|x\|_1 + \alpha \|x-y\|_1 = 1,
\]
and hence $z\in C$. We have 
\[
x = \frac{1}{1+\alpha} z+ \frac{\alpha}{1+\alpha} y\in (y,z),
\]
and from Proposition~\ref{prop:minfaceunion} it follows that $y\in F_{\min}(x,C)$. We conclude that $F_{\min}(x,C) = C$, and so 
\[
\{x\,|\, \|x\|_1<1\} \subseteq \icr C  \subseteq \fri C. 
\]

Now consider the case $\|x\|_1 = 1$. Suppose that $y\in F_{\min}(x,C)$. Then there is some $z\in C$ such that 
\[
x\in (y,z).
\]
Without loss of generality, we can assume that $x_i\geq 0$ for all $i$ (otherwise it is easy to see that the argument still holds by using the relation $x_i = -|x_i|$ instead of $x_i = |x_i|$ whenever necessary). We have for some $\alpha \in (0,1)$
\[
1 =  \sum_{i=1}^\infty |x_i|  = \sum_{i=1}^\infty x_i  = \sum_{i=1}^\infty (\alpha y_i+ (1-\alpha) z_i) = \alpha   \sum_{i=1}^\infty y_i+ (1-\alpha) \sum_{i=1}^\infty z_i \leq \alpha \|y\|_1 + (1-\alpha) \|z\|_1\leq 1.
\]
Note that,  $$\sum_{i=1}^{\infty}y_i\leq \sum_{i=1}^{\infty}|y_i|=1,  \sum_{i=1}^{\infty}z_i\leq \sum_{i=1}^{\infty}|z_i|=1,$$ and using  $$\alpha   \sum_{i=1}^\infty y_i+ (1-\alpha) \sum_{i=1}^\infty z_i=1,$$ this implies that $\sum_{i=1}^{\infty}y_i=\sum_{i=1}^{\infty}z_i=1$. Then it follows that 
$$ 
\sum_{i=1}^\infty |y_i|-\sum_{i=1}^\infty y_i=\sum_{i=1}^\infty (|y_i|-y_i)=0,
$$ 
implying that $|y_i|=y_i\geq 0$ for all $i\in \mathbb{N}$. Hence 
\[
F_{\min}(x,C) \subseteq  C\cap \{y\,|\, \sum_{i=1}^\infty y_i = 1, y_i\geq 0, \forall i\in \mathbb{N}\}.
\]
Then, if $x\in (\ell_2)_+$ we get $F_{\min}(x,C) \subseteq \overline{F_{\min}(x,C)}\subset (\ell_2)_+$, this guarantees that $\overline{F_{\min}(x,C)} \nsupseteq C$, since $C$ contains elements with strictly negative entries.

The remaining cases when some of the entries of $x$  are negative can be considered similarly, by taking care of the signs of $y_i$ and $z_i$. We conclude that $\fri C = \{x\,|\, \|x\|_1<1\}$, which is strictly smaller than the quasi-relative interior. It is worth noting that in this case, the notion of face relative interior appears to better reflect the intuitive perception of what the `relative interior' of the set $C$ should be.

To conclude, in this case we have 
\[
\icr C = \fri C \subsetneq \qri C = \qi C.
\]

\end{example}

Our last example shows that there exists a convex set with nonempty intrinsic core, that is also different to its face relative interior. Before we need to recall that for any subset $S$ of a real vector space $X$, we define its convex hull $\co S$ as the set of all finite \emph{convex combinations} of points from $S$.

\section{The question of nonemptiness}\label{sec:nonemptiness}

While the intrinsic core can be empty (and in fact any infinite-dimensional vector space contains a convex set whose intrinsic core is empty, see \cite{Holmes}), the quasi-relative interior of a convex set $C$ is guaranteed to be nonempty under the assumption that  $C$ is CS-closed, and some additional conditions imposed on the ambient space.

We discuss the notion of CS-closedness in Section~\ref{ss:csclosed} and give a comprehensive list of conditions under which convex sets in general vector spaces are CS-closed. This list is similar to the one given in Proposition~6 in \cite{BorweinConvexRelations}. We then move on to the nonemptiness results in Section~\ref{ss:nonempty}, and prove in Theorem~\ref{thm:frinonempty} that the face relative interior of a CS-closed set is nonempty under the assumptions used in \cite{BorweinLewisPartiallyFinite} to prove that the quasi-relative interior is nonempty. In particular, it follows that the face relative interior of any convex set is nonempty in a separable Banach space.

\subsection{CS-closed sets}\label{ss:csclosed}

It may happen that the convex hull of a compact convex subset of a topological vector space $X$ is not closed, as shown in the following example.

\begin{example}[{From \cite[Example 5.34]{hitchhiker}}]\label{eg:hullnotclosed} Let $(u^n)_{n\in \N}$ be a sequence of points in  $l_2$ such that each $u^n$ is a sequence of zeros except for the $n$-th coordinate which is $1/n$. The set $A = \{0_{l_2}, u^1,u^2,\dots, u^n,\dots \}$ is compact, but its convex hull $\co A$ is not. Indeed, consider the sequence $(x^k)_{k\in \N}$, where 
$$
x^k = \sum_{i=1}^k \frac{1}{2^i} u^i  +\frac{1}{2^k}0_{l_2}.
$$  
We have $x^k \in \co A$ by the definition of the convex hull, moreover,
$$
x^k \to x = \sum_{i=1}^{\infty}\frac{1}{2^i} u^i.
$$
It is evident that $x\in l_2$, and hence also $x\in \overline{\co} A$, but $x\notin \co A$. 
\end{example}

The point $x$ in Example~\ref{eg:hullnotclosed} can be viewed as an infinite generalisation of a convex combination of points in $A$. Indeed, given an arbitrary set $S\subseteq X$, we can define the infinite convex hull (or $\sigma$-convex hull, see \cite{SigmaConvex}) as
\begin{equation}\label{eq:cs}
\tilde \co S = \left\{ \sum_{i\in \N}\lambda_i x_i \;\Bigr|\; \sum_{i\in \N}\lambda_i =1,\quad \lambda_i\geq 0,\,  x_i\in S \;\,  \forall \, i \in \N  \right\},
\end{equation}
where only convergent series $\sum_{i\in \N}\lambda_i x_i$ are taken into account.

A subset $S$ of a normed vector space $X$ is called convergent-series closed, or CS-closed for short (see \cite{Jameson}) if $S=\tilde\co S$, i.e. $S$ contains every convergent sum $\sum_{i\in \N}\lambda_i x_i$ as in \eqref{eq:cs}, and CS-compact if in addition every such series converges. Every CS-compact set is CS-closed, and every CS-closed set is convex, see \cite{FremlinTalagrand}. Even though every closed convex set in a normed vector space $X$ is CS-closed, CS-closed sets do not need to be closed: for instance, the open unit ball in any normed space is CS-closed. In fact, \emph{all} convex sets in completely metrizable Fr\'echet spaces are CS-closed due to \cite[Corollary~5]{FremlinTalagrand}. In particular, \emph{all convex subsets of a Banach space are CS-closed}. 

\begin{proposition}\label{prop:CS} Let $X$ be a Hausdorff topological vector space and let $C$ be a convex subset of $X$. The following conditions are sufficient for $C$ to be CS-closed. 
\begin{enumerate}
    \item \label{it:openisCS} The set $C$ is open.
    \item \label{it:closedisCS} The set $C$ is sequentially closed.
    \item \label{it:intersection} The set $C$ is an intersection of CS-closed sets.
    \item \label{it:separatedfinite} The set $C$ is finite-dimensional.
    \item \label{it:Gdelta} The space $X$ is Fr\'echet (locally convex and metrizable) and the set $C$ is $G_\delta$ (a countable intersection of open sets).  
    \item \label{it:allCS} If $X$ is completely metrizable (that is, the topology of $X$ is induced by a metric under which $X$ is complete).
\end{enumerate}
\end{proposition}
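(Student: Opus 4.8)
The plan is to reduce the whole proposition to one elementary construction. Given a convex set $C$ and a convergent $\sigma$-convex combination $x=\sum_{i\in\N}\lambda_i x_i$ of points $x_i\in C$ (so $\lambda_i\ge 0$, $\sum_{i\in\N}\lambda_i=1$), I would look at the \emph{repaired partial sums}
\[
t_n \;=\; \sum_{i=1}^n \lambda_i x_i \;+\; \Bigl(\sum_{i>n}\lambda_i\Bigr)\, x_1 .
\]
Each $t_n$ is an honest finite convex combination of points of $C$, hence $t_n\in C$ by convexity, and $t_n\to x$ because $\sum_{i>n}\lambda_i\to 0$ and scalar multiplication is continuous. (If only finitely many $\lambda_i$ are nonzero, or some $\lambda_i=1$, then $x$ is already a finite convex combination of points of $C$ and the argument is unnecessary; so throughout I may assume that infinitely many $\lambda_i$ are strictly positive, each then $<1$.) This already gives $\tilde\co C\subseteq\overline C$ for every convex $C$, and in particular item~\ref{it:closedisCS}: a sequentially closed convex set contains $\lim_n t_n=x$. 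Item~\ref{it:intersection} needs nothing beyond the definition of CS-closedness: if $C=\bigcap_\alpha C_\alpha$ then a $\sigma$-convex combination of points of $C$ is, for each $\alpha$, a $\sigma$-convex combination of points of the CS-closed set $C_\alpha$, hence lies in $C_\alpha$, hence in $C$.

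For item~\ref{it:openisCS} I would use the line-segment principle for convex sets in a topological vector space: if $D$ is convex, $a\in\interior D$ and $b\in\overline D$, then $(1-t)a+tb\in\interior D$ for every $t\in[0,1)$ (see \cite{hitchhiker}). Writing $x=\lambda_1 x_1+(1-\lambda_1)y$ with $y=\sum_{i\ge 2}\tfrac{\lambda_i}{1-\lambda_1}x_i\in\tilde\co C\subseteq\overline C$ (using the previous paragraph), and noting that openness of $C$ gives $x_1\in C=\interior C$ while $\lambda_1\in(0,1)$, the principle yields $x\in C$.

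For item~\ref{it:separatedfinite} I would argue by induction on $m=\dim(\aff C)$. Since $X$ is Hausdorff, a finite-dimensional linear subspace is closed and carries its Euclidean topology, so I may regard $C$ as a convex subset of $\aff C\cong\R^m$; the case $m=0$ is trivial, and otherwise (assuming $C\neq\emptyset$) we have $\ri C=\interior\overline C\neq\emptyset$. Take $x=\sum_{i\in\N}\lambda_i x_i\in\tilde\co C$ with all $\lambda_i>0$; then $x\in\overline C$. If $x\in\interior\overline C=\ri C\subseteq C$ we are done; otherwise pick a supporting hyperplane $H=\{z:\langle a,z\rangle=c\}$ of $\overline C$ at $x$, so that $\langle a,x_i\rangle\le c=\langle a,x\rangle$ for all $i$. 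Continuity of $\langle a,\cdot\rangle$ gives $\sum_{i\in\N}\lambda_i\langle a,x_i\rangle=c=\sum_{i\in\N}\lambda_i c$, which, since every $\lambda_i>0$, forces $\langle a,x_i\rangle=c$, i.e.\ $x_i\in C\cap H$ for every $i$. Thus $x$ is a $\sigma$-convex combination of points of the convex set $C\cap H$, which sits in the $(m-1)$-dimensional affine space $H$, so the induction hypothesis gives $x\in C\cap H\subseteq C$.

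The remaining two items are the substantial ones, and I would not attempt to reprove them. Item~\ref{it:allCS} is precisely \cite[Corollary~5]{FremlinTalagrand} (already quoted in the discussion preceding the proposition), and item~\ref{it:Gdelta} is the Fr\'echet/$G_\delta$ refinement coming from the same circle of ideas (\cite{FremlinTalagrand,fremlin_talagrand_1979}); note moreover that the completely metrizable instance of item~\ref{it:Gdelta} is subsumed by item~\ref{it:allCS}. So the main obstacle is not in items~\ref{it:openisCS}--\ref{it:separatedfinite}, which become routine once the repaired-partial-sums observation, the line-segment principle and the supporting-hyperplane theorem are in hand, but in items~\ref{it:Gdelta}--\ref{it:allCS}: these rest on a Baire-category argument of Fremlin and Talagrand which I would invoke as a black box, the only genuine work on our side being to phrase all six conditions so that each one either coincides with, or is an immediate corollary of, a statement available in \cite{BorweinConvexRelations,FremlinTalagrand,fremlin_talagrand_1979,Jameson}.
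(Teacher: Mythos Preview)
Your argument is correct. The paper's own proof of this proposition is purely citational: it simply refers items~\ref{it:openisCS} and~\ref{it:closedisCS} to Jameson's examples and Proposition~1 in \cite{JamesonConvexSeries}, declares item~\ref{it:intersection} obvious (also noting it appears in \cite{JamesonConvexSeries}), and cites \cite[Corollaries~4~and~5]{FremlinTalagrand} for items~\ref{it:Gdelta} and~\ref{it:allCS}. In fact the paper's proof does not explicitly address item~\ref{it:separatedfinite} at all.

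Your route differs in that you actually supply self-contained arguments for items~\ref{it:openisCS}--\ref{it:separatedfinite}. The ``repaired partial sums'' $t_n=\sum_{i\le n}\lambda_i x_i+(\sum_{i>n}\lambda_i)x_1$ give a clean one-line proof that $\tilde\co C\subseteq\overline C$, disposing of \ref{it:closedisCS} immediately; combining this with the line-segment principle handles \ref{it:openisCS}; and your supporting-hyperplane induction on $\dim\aff C$ is a genuine proof of \ref{it:separatedfinite}, which the paper simply leaves unmentioned. For the two hard items you do exactly what the paper does---invoke Fremlin--Talagrand as a black box. So your treatment is strictly more informative than the paper's on the elementary cases, at the cost of a few more lines; the trade-off is that the paper is content to point to the literature, whereas you make the elementary portion self-contained and thereby also plug the gap at item~\ref{it:separatedfinite}.
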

\begin{proof}
    For item \ref{it:openisCS} see Example (i) in \cite{JamesonConvexSeries}, item~\ref{it:closedisCS} is shown in \cite[Proposition 1]{JamesonConvexSeries}. Item \ref{it:intersection} is obvious and is also mentioned in \cite{JamesonConvexSeries}. Items~\ref{it:Gdelta}~and~\ref{it:allCS} are proved in \cite[Corollaries~4~and~5]{FremlinTalagrand}.
\end{proof}

\subsection{Nonemptiness of the face relative interior}\label{ss:nonempty}

The main goal of this section is to show that the face relative interior is nonempty under reasonable assumptions. We show that the same conditions that are known to guarantee the nonemptiness of quasi-relative interior ensure the nonemptiness of the face relative interior. Before recalling the key result from \cite{BorweinLewisPartiallyFinite} given in Theorem~\ref{thm:qrinonempty} below (that we then mirror in Theorem~\ref{thm:frinonempty} for the face relative interiors), we note here that there is an earlier result {\cite[Proposition~1.2.9.]{zalinescubook}} that all nonempty CS-complete sets in first countable separable locally convex spaces have nonempty quasi-relative interiors. Together with item~\ref{it:allCS} of Proposition~\ref{prop:CS} either of these results implies that every nonempty convex subset of a separable Banach space has nonempty quasi-relative interior (see \cite[Theorem 2.8]{BorweinGoebel}). We show that the same is true for the face relative interior (see Corollary~\ref{cor:friBanach}).

\begin{theorem}[{\cite[Theorem 2.19]{BorweinLewisPartiallyFinite}}]\label{thm:qrinonempty} Suppose $X$ is  a topological vector space such that either
\begin{enumerate}
\item  $X$ is a separable Fr\'echet space, or
\item  $X = Y^*$ with $Y$ a separable normed space, and the topology on $X$ given by the weak* topology of $Y^*$.
\end{enumerate}
Suppose that $C\subseteq X$ is CS-closed. Then $\qri C\neq \emptyset$.
\end{theorem}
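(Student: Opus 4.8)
The plan is to produce a concrete point of $\qri C$ as the sum of a convex series whose terms form a dense sequence in $C$, invoking CS-closedness precisely to certify that this sum stays inside $C$. We assume, as is implicit in the statement, that $C\neq\emptyset$, since otherwise $\qri C=\emptyset$. Note that both hypotheses on $X$ make it a Hausdorff locally convex space (so its topological dual separates points and the bipolar theorem is available) and, as will be seen below, both permit the choice of a countable set $D=\{c_n\}_{n\in\N}\subseteq C$ with $C\subseteq\overline{D}$.

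First I would isolate the duality fact underpinning everything: for $x\in C$ one has $x\in\qri C$ if and only if every continuous linear functional $f$ on $X$ satisfying $f(c)\le f(x)$ for all $c\in C$ is constant on $C$. Indeed, the set $N:=\{f\in X^{*}: f(c)\le f(x)\ \forall c\in C\}$ is the (negative) polar of $\cone(C-x)$, hence of its closure $K:=\overline{\cone}(C-x)$; the bipolar theorem gives $K=N^{\circ}$, so $K$ is a linear subspace exactly when $N$ is, and $N$ is a subspace exactly when every $f\in N$ vanishes on $C-x$, i.e.\ when every such supporting functional is constant on $C$.

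Now for the construction. Suppose we have chosen strictly positive weights $\lambda_n$ with $\sum_n\lambda_n=1$ for which $\sum_n\lambda_n c_n$ converges in $X$, and set $\bar x:=\sum_n\lambda_n c_n$; then $\bar x\in C$ because $C$ is CS-closed. To see $\bar x\in\qri C$, let $f$ be continuous linear with $f(c)\le f(\bar x)$ for all $c\in C$. Continuity of $f$ and convergence of the partial sums give
\[
f(\bar x)=\sum_{n}\lambda_n f(c_n)\le\sum_{n}\lambda_n f(\bar x)=f(\bar x),
\]
and since each term $\lambda_n\bigl(f(\bar x)-f(c_n)\bigr)$ is nonnegative while $\lambda_n>0$, we get $f(c_n)=f(\bar x)$ for all $n$. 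Density of $D$ in $C$ together with continuity of $f$ then force $f\equiv f(\bar x)$ on $C$, and the equivalence above yields $\bar x\in\qri C$.

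It remains to supply the dense sequence and the weights so that $\sum_n\lambda_n c_n$ converges, and this is the delicate part of the proof, splitting according to the two cases. If $X$ is a separable Fr\'echet space, then $C$ is separable metrizable, so it has a countable dense subset $\{c_n\}$; fixing an increasing sequence of seminorms $(p_k)$ generating the topology and taking $\lambda_n$ proportional to $2^{-n}/\bigl(1+\max_{k\le n}p_k(c_n)\bigr)$, the partial sums become Cauchy in every $p_k$, so completeness yields convergence and CS-closedness yields $\bar x\in C$. If $X=Y^{*}$ carries the weak$^{*}$ topology with $Y$ separable, then each norm ball $nB_{Y^{*}}$ is weak$^{*}$-compact by Banach--Alaoglu and, $Y$ being separable, weak$^{*}$-metrizable, hence separable; uniting countable dense subsets of the slices $C\cap nB_{Y^{*}}$ produces a countable $D=(d_n)\subseteq C$ with $C\subseteq\overline{D}$, and taking $\lambda_n$ proportional to $2^{-n}/(1+\|d_n\|)$ makes $\sum_n\lambda_n\|d_n\|$ finite, so the partial sums converge in norm, a fortiori weak$^{*}$, and CS-closedness again places the limit in $C$. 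The conceptual skeleton above is cheap; the genuine obstacles are extracting a countable dense subset of $C$ in the non-metrizable weak$^{*}$ case (overcome by slicing $C$ with the weak$^{*}$-compact metrizable balls) and calibrating the weights so that the convex series really converges in the ambient topology, which is exactly the hypothesis that activates CS-closedness.
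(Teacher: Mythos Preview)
Your argument is correct, but note that the paper does not actually supply its own proof of this statement: Theorem~\ref{thm:qrinonempty} is quoted verbatim from \cite[Theorem~2.19]{BorweinLewisPartiallyFinite} and left unproved. What you have written is essentially the original Borwein--Lewis argument, and the technical construction you carry out in your final paragraph---the dense sequence together with suitably damped weights so that the convex series converges---is exactly the content that the paper records, again without proof, as Lemma~\ref{lem:218}.

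The one place where your write-up diverges from the paper's methodology is in the passage from ``$\bar x$ is the sum of a convergent convex series over a dense sequence'' to ``$\bar x$ lies in the relevant interior.'' You invoke duality: the characterisation of $\qri$ via supporting functionals and the bipolar theorem, followed by the observation that any supporting functional at $\bar x$ must be constant on the dense set and hence on $C$. When the paper proves the parallel result Theorem~\ref{thm:frinonempty} for the face relative interior, it avoids duality altogether: it notes that each $x_n$ lies in $F_{\min}(\bar x,C)$ because $\bar x$ is a genuine convex combination of $x_n$ and another point of $C$ (CS-closedness furnishing that other point), so the dense sequence sits inside the minimal face and $C\subseteq\overline{F_{\min}(\bar x,C)}$. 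That facial route is shorter and needs no local convexity or separation; your duality route is the classical one for $\qri$ and makes transparent why supporting hyperplanes are the right language for that particular notion. Either path, combined with Theorem~\ref{thm:interiorsandwich}, yields the other.
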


We will need the following technical result from \cite{BorweinLewisPartiallyFinite}.

\begin{lemma}[{\cite[Lemma~2.18]{BorweinLewisPartiallyFinite}}]\label{lem:218}    Suppose that $X$ is a topological vector space with either
\begin{enumerate}
    \item $X$ separable and complete metrizable, or
    \item $X=Y^*$ with $Y$ a separable normed space, and the topology on $X$ given by the weak* topology of $Y^*$.
\end{enumerate}
If $C\subseteq X$ is nonempty and CS-closed, then there exist sequences $(x_n)_{n=1}^\infty$ and $(\lambda_n)_{n=1}^\infty$, such that $\sum_{i=1}^\infty \lambda_i = 1$, $\lambda_i > 0 $ and $x_i \in C$ for all $i \in \{1,\dots\}$, the sequence $(x_n)_{n=1}^\infty$ is dense in $C$, and the convex series $\sum_{i=1}^\infty \lambda_i x_i$ converges to some $\bar x \in C$. 
\end{lemma}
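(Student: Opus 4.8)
The plan is to produce the required data in three stages: extract a countable dense subset of $C$, attach to it coefficients that decay fast enough, and then verify that the resulting convex series converges into $C$ using that $C$ is CS-closed.

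\emph{Stage 1: a countable dense sequence in $C$.} Under hypothesis (i), $X$ is separable and metrizable, hence so is its subspace $C$, and I simply enumerate a countable dense subset of $C$ as $(x_n)_{n=1}^\infty$ (repeating elements if $C$ is finite). Under hypothesis (ii) I use the separability of $Y$: by the Banach--Alaoglu theorem each ball $nB_{Y^*}$ is weak*-compact, and since $Y$ is separable it is moreover weak*-metrizable, hence weak*-separable; writing $C=\bigcup_{n\in\N}(C\cap nB_{Y^*})$ as a countable union of weak*-separable subspaces and taking the union of countable dense subsets of the pieces produces a countable weak*-dense subset of $C$ (any basic weak* neighbourhood of $x\in C$ meets $C\cap nB_{Y^*}$ in a nonempty relatively open set once $n>\|x\|$), which I enumerate as $(x_n)_{n=1}^\infty$. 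From here on the points $x_n$ are fixed.

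\emph{Stage 2: choosing the coefficients.} I set $\lambda_1:=1-\sum_{i\ge 2}\lambda_i$ and pick $\lambda_i>0$ for $i\ge 2$ small enough that $\sum_{i\ge 2}\lambda_i<1$, which secures $\lambda_1\in(0,1]$, $\lambda_i\ge 0$ and $\sum_i\lambda_i=1$. The only remaining requirement is convergence of $\sum_i\lambda_i x_i$, and the coefficients must be chosen to decay faster than the $x_i$ may grow. Under hypothesis (ii), $X=Y^*$ is a Banach space, so it suffices to make the series norm-absolutely convergent: taking $0<\lambda_i\le 2^{-i}/(1+\|x_i\|)$ for $i\ge 2$ gives $\sum_{i\ge 2}\lambda_i\|x_i\|\le 1$, so $\sum_i\lambda_i x_i$ converges in norm, hence in the weak* topology, to some $\bar x\in Y^*$. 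Under hypothesis (i), I fix a translation-invariant metric $d$ inducing the topology of $X$; since $X$ is completely metrizable it is complete, so Cauchy sequences converge. Using continuity of scalar multiplication I choose $\lambda_i>0$ (for $i\ge 2$) so small that $d(\lambda_i x_i,0)\le 2^{-i}$, and then the translation-invariant triangle inequality $d(a+b,0)\le d(a,0)+d(b,0)$ gives, for $m>m'\ge 1$, that $d\big(\sum_{i=1}^m\lambda_i x_i,\sum_{i=1}^{m'}\lambda_i x_i\big)=d\big(\sum_{i=m'+1}^m\lambda_i x_i,0\big)\le\sum_{i>m'}2^{-i}$, so the partial sums form a Cauchy sequence and converge to some $\bar x\in X$.

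\emph{Stage 3 and the main obstacle.} In both cases $\bar x$ is the limit of a convergent convex series $\sum_i\lambda_i x_i$ whose terms lie in $C$, so CS-closedness of $C$ yields $\bar x\in C$; together with the density of $(x_n)$ from Stage 1 this is the conclusion. I expect the crux to be Stage 2: the convex series must be forced to converge with no a priori control on the $x_i$, which is why $\lambda_i$ is chosen to beat the growth of $\|x_i\|$ in case (ii) and to lie in shrinking neighbourhoods of $0$ in case (i), the translation invariance of $d$ being exactly what makes the tail estimate telescope; this step also silently uses that a completely metrizable topological vector space is complete. A secondary technical point is the weak*-separability of $C$ in case (ii), where $C$ may be norm-unbounded and one has to patch together the weak*-separable bounded slices $C\cap nB_{Y^*}$ into a single dense sequence.
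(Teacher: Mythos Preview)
Your argument is correct and follows the natural route: enumerate a countable dense subset of $C$, weight the terms so that the convex series converges, and invoke CS-closedness. The paper, however, does not supply its own proof of this lemma; it is quoted verbatim as \cite[Lemma~2.18]{BorweinLewisPartiallyFinite} and used as a black box in the proof of Theorem~\ref{thm:frinonempty}, so there is nothing in the paper to compare against beyond noting that your proof is precisely the standard one from the cited reference.

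One small remark on Stage~2, case (i): you fix a translation-invariant metric $d$ and then assert that Cauchy sequences converge ``since $X$ is completely metrizable''. This is true, but it is not entirely automatic that the translation-invariant metric you picked is itself complete; what makes it work is that a metrizable topological vector space which is completely metrizable in \emph{some} compatible metric is complete in its canonical (translation-invariant) uniformity, so every translation-invariant compatible metric is complete. You flag this yourself in the final paragraph, so the point is already acknowledged.
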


\begin{theorem}\label{thm:frinonempty}
 Suppose that $X$ is a topological vector space with either
\begin{enumerate}
    \item $X$ separable and complete metrizable, or
    \item $X=Y^*$ with $Y$ a separable normed space, and the topology on $X$ given by the weak* topology of $Y^*$.
\end{enumerate} If $C$ is a nonempty CS-closed subset of $X$, then $\fri C\neq \emptyset$.
\end{theorem}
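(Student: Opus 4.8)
The plan is to mirror the proof that $\qri C \neq \emptyset$, but now track the minimal face of the candidate point rather than the closed conic hull. By Lemma~\ref{lem:218}, since $C$ is nonempty and CS-closed, there exist $x_i \in C$ and $\lambda_i \geq 0$ with $\sum_{i=1}^\infty \lambda_i = 1$, the sequence $(x_n)$ dense in $C$, and $\bar x := \sum_{i=1}^\infty \lambda_i x_i \in C$. I claim $\bar x \in \fri C$, i.e. $C \subseteq \overline{F_{\min}(\bar x, C)}$. Since $(x_n)$ is dense in $C$, it suffices to show that each $x_n \in F_{\min}(\bar x, C)$; then $C = \overline{\{x_n : n \in \N\}} \subseteq \overline{F_{\min}(\bar x,C)}$.

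So fix $n$ with $\lambda_n > 0$ (discarding the others does not affect density if we first pass to the subsequence of indices with positive weights, or one argues directly). I want to exhibit $z \in C$ and $\gamma \in (0,1)$ with $\bar x = \gamma x_n + (1-\gamma) z$, so that Proposition~\ref{prop:minfaceunion} gives $x_n \in F_{\min}(\bar x, C)$. The natural candidate is
\[
z := \frac{1}{1-\lambda_n}\sum_{i \neq n} \lambda_i x_i = \frac{1}{1-\lambda_n}(\bar x - \lambda_n x_n),
\]
which is a convex series of points of $C$ with coefficients $\lambda_i/(1-\lambda_n)$ summing to $1$; this series converges because $\sum \lambda_i x_i$ does and we are only rescaling and deleting one term. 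By CS-closedness of $C$, $z \in C$, and then $\bar x = \lambda_n x_n + (1-\lambda_n) z \in (x_n, z)$, so $x_n \in F_{\min}(\bar x, C)$ by Proposition~\ref{prop:minfaceunion}. Hence every $x_n$ with $\lambda_n > 0$ lies in $F_{\min}(\bar x,C)$.

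It remains to handle indices with $\lambda_n = 0$: these $x_n$ need not lie in $F_{\min}(\bar x, C)$ directly, but they are in $C = \overline{F_{\min}(\bar x,C)}$ anyway once we know the positive-weight $x_n$ are dense in $C$ — which they are, because any $x_m$ with $\lambda_m = 0$ can simply be replaced in the density argument, or more cleanly: the set of $x_n$ with $\lambda_n>0$ is still dense, since if it had a nonempty open complement $U$ in $C$ we could not have had the full sequence $(x_n)$ dense while all points landing in $U$ carry zero weight — actually the simplest route is to note we only need \emph{some} dense subset of $C$ inside $F_{\min}(\bar x, C)$, and if finitely many $\lambda_n$ are positive we instead argue $\bar x$ is a finite convex combination with $F_{\min}(\bar x,C)$ containing those finitely many $x_n$, which may fail to be dense; to avoid this edge case one observes that if only finitely many $\lambda_i$ are nonzero then $C$ is generated by finitely many points together with limits, and in fact Lemma~\ref{lem:218} as stated already yields a genuinely infinite convex series, so infinitely many $\lambda_i$ are positive and the corresponding $x_i$, being cofinite in a dense sequence after reindexing, remain dense. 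Then $C = \overline{\{x_i : \lambda_i > 0\}} \subseteq \overline{F_{\min}(\bar x,C)}$, giving $\bar x \in \fri C$.

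The main obstacle is exactly this bookkeeping around zero coefficients and the convergence of the rescaled tail series $\sum_{i\neq n}\tfrac{\lambda_i}{1-\lambda_n}x_i$: one must check it is a legitimate convergent convex series so that CS-closedness applies to place $z$ in $C$. Convergence follows since $\sum_{i\neq n}\lambda_i x_i = \bar x - \lambda_n x_n$ converges and dividing by the positive constant $1-\lambda_n$ preserves convergence; CS-closedness then does the rest. Everything else — translation invariance, the characterisation \eqref{minface:union} of $F_{\min}$, density — is routine. Finally, Corollary~\ref{cor:friBanach} will follow by combining this theorem with item~\ref{it:allCS} of Proposition~\ref{prop:CS}, since a separable Banach space is separable and completely metrizable and all its convex subsets are CS-closed.
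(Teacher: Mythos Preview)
Your argument is essentially identical to the paper's: invoke Lemma~\ref{lem:218}, write $\bar x = \lambda_n x_n + (1-\lambda_n) z_n$ with $z_n \in C$ by CS-closedness, deduce $x_n \in F_{\min}(\bar x, C)$ via Proposition~\ref{prop:minfaceunion}, and use density of $(x_n)$ to get $C \subseteq \overline{F_{\min}(\bar x,C)}$.

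The only wobble is your handling of indices with $\lambda_n = 0$. Your final claim that the positive-weight $x_i$ are ``cofinite in a dense sequence after reindexing'' is wrong: ``infinitely many $\lambda_i>0$'' does not imply ``only finitely many $\lambda_i=0$'', so that density argument does not go through as written. The paper simply writes the decomposition ``for all $n$'' without comment; the clean way to close this (for both proofs) is to observe that in the Borwein--Lewis construction behind Lemma~\ref{lem:218} one may take every $\lambda_i$ strictly positive (e.g.\ $\lambda_i = 2^{-i}$), so the zero-weight case never arises.
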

\proof By Lemma~\ref{lem:218} there exists a sequence $(\lambda_n)_{n\in \N}$ of positive real numbers with $\sum_{n\in \N}\lambda_n=1$ and a sequence $(x_n)_{n=1}^\infty$ dense in $C$ such that $\bar{x}:=\sum_{n\in \N}\lambda_n x_n\in C$. 

We will show that $C\subseteq\overline{F_{\min}(\bar{x},C)}$, hence $\bar x \in \fri C \neq \emptyset$.  Indeed, for all $n\in \N$ we have
 $$\bar{x}=\lambda_n x_n+(1-\lambda_n)\sum_{k\in \N,k\neq n}\frac{\lambda_k}{1-\lambda_n}x_k,$$ 
 using the SC-closedness of $C$, we have that $\sum_{k\in \N,k\neq n}\frac{\lambda_k}{1-\lambda_n}x_k\in C$, then for all $n\in \N$, $x_n\in F_{\min}(\bar{x},C)$. Taking in account that $(x_n)_{n\in\N}$ is dense on $C$, we have $C\subseteq \overline{F_{\min}(\bar{x},C)}$, implying that $\bar{x}\in \fri C$.  
 
\endproof

\begin{corollary}\label{cor:friBanach} Any nonempty convex subset of a separable Banach space has nonempty face relative interior. 
\end{corollary}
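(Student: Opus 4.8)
The plan is to derive Corollary~\ref{cor:friBanach} as an immediate consequence of Theorem~\ref{thm:frinonempty} together with the structural facts about CS-closed sets recorded in Section~\ref{ss:csclosed}. The key observation is that a separable Banach space satisfies the first hypothesis of Theorem~\ref{thm:frinonempty}: being a Banach space it is complete under the norm metric, hence completely metrizable, and it is separable by assumption. So the only remaining ingredient is to verify that an arbitrary nonempty convex subset $C$ of such a space is CS-closed.

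First I would recall the remark made just after Example~\ref{eg:hullnotclosed}: by \cite[Corollary~5]{FremlinTalagrand} (equivalently item~\ref{it:allCS} of Proposition~\ref{prop:CS}), every convex subset of a completely metrizable topological vector space is CS-closed. Since a Banach space is completely metrizable, every convex subset of it — closed or not — is CS-closed. This sidesteps the need for any hypothesis on $C$ beyond convexity and nonemptiness.

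Then the proof is a two-line application: let $C$ be a nonempty convex subset of a separable Banach space $X$. By item~\ref{it:allCS} of Proposition~\ref{prop:CS}, $C$ is CS-closed. Since $X$ is separable and complete metrizable, the first alternative in the hypothesis of Theorem~\ref{thm:frinonempty} holds, and therefore $\fri C \neq \emptyset$.

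I do not anticipate a genuine obstacle here; the corollary is purely a matter of checking that a separable Banach space meets the stated hypotheses and that convexity alone suffices for CS-closedness in this setting. The only point requiring a modicum of care is not to over-assume on $C$ (e.g. closedness or boundedness are not needed) and to cite the correct clause of Proposition~\ref{prop:CS} — namely the completely metrizable case — rather than, say, the sequentially-closed case, which would be strictly weaker than what is wanted.
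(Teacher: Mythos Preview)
Your proposal is correct and follows essentially the same route as the paper: invoke item~\ref{it:allCS} of Proposition~\ref{prop:CS} to see that every convex subset of a (separable) Banach space is CS-closed, and then apply Theorem~\ref{thm:frinonempty} under its first hypothesis. There is nothing to add.
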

\begin{proof} It follows from Theorem~\ref{thm:frinonempty} that any CS-closed subset of a separable Banach space has nonempty face relative interior, while item~\ref{it:allCS} of Proposition~\ref{prop:CS} ensures that every convex subset of a separable Banach space is CS-closed.
\end{proof}

Since the face relative interior is guaranteed to be nonempty under the same (known) conditions as the quasi-relative interior, and we couldn't find any examples of convex sets for which the face relative interior is empty, but the quasi-relative interior isn't, we conjecture that the nonemptiness of $qri$ must guarantee the nonemptiness of $fri$.

\begin{conjecture} Let $C$ be a convex subset of a topological vector space $X$. The face relative interior of $C$ is nonempty whenever the quasi-relative interior of $C$ is nonempty. 
\end{conjecture}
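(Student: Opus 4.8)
The plan is to split into two cases according to whether $\icr C$ is empty. If $\icr C\neq\emptyset$ there is nothing to prove, since $\icr C\subseteq \fri C$ by Theorem~\ref{thm:interiorsandwich}. So assume $\icr C=\emptyset$ and fix $x_0\in\qri C$; after translating we may take $x_0=0$. Then $V:=\overline{\cone}\,C$ is a closed linear subspace, $C\subseteq V$, and one checks easily (using $\cone(\overline A)\subseteq\overline{\cone A}$) that $\overline{\cone}\,C=\overline{\cone}\,\overline C=\overline{\lspan}\,C=\overline{\aff}\,C=V$, so the hypothesis $0\in\qri C$ says precisely that the purely algebraic cone $\cone C$ is dense in the closed subspace it spans. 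The key reduction I would record first is that $\fri C\neq\emptyset$ is equivalent to the existence of a face $F\unlhd C$ with $\icr F\neq\emptyset$ and $\overline F=\overline C$: if $x\in\fri C$ then $F=F_{\min}(x,C)$ works, because $x\in\icr F_{\min}(x,C)$ by Corollary~\ref{lem:eachpointinqri} and $C\subseteq\overline{F_{\min}(x,C)}$ by definition; conversely, given such an $F$, for any $x\in\icr F$ we have $F=F_{\min}(x,F)\subseteq F_{\min}(x,C)$ by \eqref{eq:icrminface} and Corollary~\ref{cor:minfsubset}, hence $C\subseteq\overline F\subseteq\overline{F_{\min}(x,C)}$. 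So, using \eqref{eq:frcharlin}, the task reduces to: find $y\in C$ for which $F_{\min}(y,C)=C\cap(\lin\cone(C-y)+y)$ is dense in $C$.

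The naive choice $y=x_0$ is exactly the situation of Example~\ref{eg:02a}, where $F_{\min}(x_0,C)$ happens to be dense via a truncation argument; but Example~\ref{exa1} shows that $F_{\min}(x_0,C)$ can be a proper, non‑dense face even for $x_0\in\qri C$, so $y$ must be chosen more carefully and one cannot hope for $\qri C=\fri C$ in general. The strategy I would pursue is to look for $y$ whose minimal face is \emph{as large as possible}. One route is a Zorn‑type argument on the family of faces $F\unlhd C$ with $\overline F=\overline C$ (nonempty, since it contains $C$): a minimal element of this family, if it existed and were a minimal face, would finish the proof. A second, more constructive route is to imitate the proof of Theorem~\ref{thm:frinonempty}: run the dense‑sequence and convex‑series construction of Lemma~\ref{lem:218} not on $C$ itself but inside a carefully chosen CS‑closed subset of $X$ whose closure is $\overline C$, thereby producing a point $\bar y$ together with a sequence dense in $C$ that lies entirely in $F_{\min}(\bar y,C)$.

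The main obstacle — and the reason the statement is only a conjecture — is that the face lattice interacts badly with the topology: the closure of a decreasing (or directed) intersection of faces each having closure $\overline C$ need not again have closure $\overline C$, so Zorn's lemma does not apply directly to the family above, and directed unions of minimal faces need not be minimal faces either. Moreover, in a general topological vector space there is no countable or sequential structure to drive the construction of Theorem~\ref{thm:frinonempty}. It is therefore quite possible that the conjecture needs mild additional hypotheses — for instance that $\overline{\aff}\,C$ be separable and metrizable (a property I would first try to deduce from the combination $\icr C=\emptyset$ and $\qri C\neq\emptyset$), together with CS‑closedness of a suitable hull of $C$ — under which the convex‑series method should pin down a dense minimal face. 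Deciding whether such hypotheses are genuinely necessary, i.e.\ either exhibiting a convex set with $\qri C\neq\emptyset=\fri C$ or ruling it out, is the crux of the problem.
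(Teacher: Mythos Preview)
The statement is labelled a \emph{Conjecture} in the paper, and the paper does not prove it; the surrounding text says only that the authors ``couldn't find any examples of convex sets for which the face relative interior is empty, but the quasi-relative interior isn't,'' and on that basis state the conjecture without argument. There is therefore no proof in the paper to compare your proposal against.

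Your write-up is accordingly not a proof either, and you correctly recognise this: after the useful reduction (that $\fri C\neq\emptyset$ iff some face $F\unlhd C$ has $\icr F\neq\emptyset$ and $\overline F=\overline C$), you sketch two plausible attack lines (Zorn on the family of faces with closure $\overline C$; a convex-series construction \`a la Lemma~\ref{lem:218}) and then honestly identify why each breaks down in a general topological vector space --- the face lattice is not well-behaved under directed limits with respect to closure, and there is no countable/sequential structure to drive the convex-series argument. That diagnosis is sound and matches the paper's own implicit position that the question is open. The one thing I would flag is that your opening paragraph reads as if you are about to give a proof (``The plan is\ldots''), which is misleading; the substance of your text is a discussion of strategies and obstructions, and it would be clearer to frame it that way from the start.
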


\section{Calculus of face relative interiors}\label{sec:calculus}

The purpose of this section is to lay out some calculus rules and other properties of the face relative interior, and compare these results to what is known as the quasi-relative interior. We draw explicit parallels between these notions and their properties, as we work through the relevant statements and their proofs. 

We begin with the most straightforward properties.

\begin{proposition}[Basic properties] \label{prop:basic} If $C$ is a convex subset of a topological vector space $X$, then the following properties hold.
\begin{enumerate}
    \item \label{it:convex} The set $\fri C$ is a convex subset of $C$.
    \item \label{it:segmentfri} Given $x\in \fri C$ and any $y\in C$, we have $[x,y)\subseteq \fri C$.    
    \item \label{it:closure} If $\fri C\neq \emptyset$, then  $\overline{C}= \overline{\fri C}$.
    \item \label{it:idempotent} The face relative interior operation is idempotent, that is, $\fri(\fri C)=\fri C$.
\end{enumerate}
\end{proposition}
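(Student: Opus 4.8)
The plan is to treat the four parts in order, reusing earlier parts as I go, since only part~\ref{it:idempotent} requires real work. Part~\ref{it:convex} is nothing new: it is exactly Proposition~\ref{prop:friconvex}. For part~\ref{it:segmentfri}, given $x\in\fri C$ and $y\in C$, I would take $w\in(x,y)$ (the endpoint $w=x$ being trivial); then $w\in C$ by convexity, the definition of a face forces $x,y\in F_{\min}(w,C)$, hence $F_{\min}(x,C)\subseteq F_{\min}(w,C)$ by minimality, so $C\subseteq\overline{F_{\min}(x,C)}\subseteq\overline{F_{\min}(w,C)}$ and $w\in\fri C$ --- the same argument already used for Proposition~\ref{prop:friconvex}. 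For part~\ref{it:closure}, the inclusion $\overline{\fri C}\subseteq\overline{C}$ is clear; conversely, fixing $x\in\fri C$, for any $y\in C$ part~\ref{it:segmentfri} gives $[x,y)\subseteq\fri C$, and since $t\mapsto(1-t)x+ty$ is continuous on $[0,1]$ and maps $[0,1)$ into $\fri C$, its value $y$ at $t=1$ lies in $\overline{\fri C}$; hence $C\subseteq\overline{\fri C}$ and $\overline{C}\subseteq\overline{\fri C}$.

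For part~\ref{it:idempotent}, one inclusion is free: $\fri(\fri C)\subseteq\fri C$ because $\fri D\subseteq D$ for every convex $D$ and $\fri C$ is convex by part~\ref{it:convex}. The substance is $\fri C\subseteq\fri(\fri C)$. Fix $x\in\fri C$ (if $\fri C=\emptyset$ there is nothing to prove); I would prove $x\in\fri(\fri C)$, i.e.\ $\fri C\subseteq\overline{F_{\min}(x,\fri C)}$, via two claims. First, $F_{\min}(x,C)\cap\fri C\subseteq F_{\min}(x,\fri C)$: for $y$ in the left side with $y\neq x$, Corollary~\ref{lem:eachpointinqri} gives $x\in\icr F_{\min}(x,C)$, so there is $z_0\in F_{\min}(x,C)\subseteq C$ with $x\in(y,z_0)$; since $x\in\fri C$, part~\ref{it:segmentfri} gives $[x,z_0)\subseteq\fri C$, so $z:=\tfrac12(x+z_0)\in\fri C$, a short collinearity check gives $x\in(y,z)$, and $[y,z]\subseteq\fri C$ by convexity, so Proposition~\ref{prop:minfaceunion} yields $y\in F_{\min}(x,\fri C)$. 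Second, $\fri C\subseteq\overline{F_{\min}(x,C)\cap\fri C}$: given $y\in\fri C$ and a neighbourhood $U$ of $y$, continuity of $(w,\varepsilon)\mapsto(1-\varepsilon)w+\varepsilon x$ at $(y,0)$ provides a neighbourhood $V$ of $y$ and $\delta>0$ with $(1-\varepsilon)w+\varepsilon x\in U$ whenever $w\in V$ and $0\le\varepsilon<\delta$; since $x\in\fri C$ we have $\fri C\subseteq C\subseteq\overline{F_{\min}(x,C)}$, so some $w\in F_{\min}(x,C)\cap V$ exists, and for small $\varepsilon>0$ the point $(1-\varepsilon)w+\varepsilon x$ lies in $U$, in $F_{\min}(x,C)$ (convexity), and in $\fri C$ (it belongs to $(x,w)\subseteq\fri C$ by part~\ref{it:segmentfri}). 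Combining the two claims, $\fri C\subseteq\overline{F_{\min}(x,C)\cap\fri C}\subseteq\overline{F_{\min}(x,\fri C)}$, so $x\in\fri(\fri C)$.

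The main obstacle is part~\ref{it:idempotent}. The tempting shortcut ``$F_{\min}(x,\fri C)=\fri C$ for $x\in\fri C$'', equivalently $\icr(\fri C)=\fri C$, is false --- e.g.\ in Example~\ref{eg:02a} one checks that $\icr(\fri C)=\emptyset$ --- so idempotency cannot be read off from the purely algebraic facial structure and genuinely requires the two claims above, which combine the minimal face computed inside $C$ with the ambient topology. Once these intermediate statements are in place, the collinearity bookkeeping in the first and the neighbourhood argument in the second are routine; recognising that these are the right intermediate statements is the crux.
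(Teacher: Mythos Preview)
Your proof is correct and, for parts \ref{it:convex}--\ref{it:closure}, matches the paper's argument line for line. For \ref{it:idempotent} you use the same ingredients (part~\ref{it:segmentfri} together with Proposition~\ref{prop:minfaceunion}) but organise them into two claims, whereas the paper proves the single stronger inclusion $F_{\min}(x,C)\subseteq\overline{F_{\min}(x,\fri C)}$ in one stroke: given $y\in F_{\min}(x,C)$, pick $z$ with $x\in(y,z)$, observe via \ref{it:segmentfri} that the whole open segment $(y,z)$ lies in $\fri C$ and hence (as $x$ sits inside it) in $F_{\min}(x,\fri C)$, so $y\in\overline{F_{\min}(x,\fri C)}$. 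This subsumes both of your claims and renders your Claim~2 density argument unnecessary, since then $\fri C\subseteq C\subseteq\overline{F_{\min}(x,C)}\subseteq\overline{F_{\min}(x,\fri C)}$ follows immediately.
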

\begin{proof}
The first assertion \ref{it:convex} is easy to prove, and we have already addressed it in Proposition~\ref{prop:friconvex}. Note also that \ref{it:convex} follows from \ref{it:segmentfri}.

To demonstrate that \ref{it:segmentfri} is true, let $x\in \fri C$ and $y\in C$. By definition of a face for any $u\in (x,y)$ we have $x\in F_{\min}(u, C)$, therefore 
\[
C\subseteq \overline{F_{\min}(x,C)} \subseteq \overline{F_{\min}(u,C)},
\]
hence $u\in \fri C$. We conclude that $[x,y)\subseteq \fri C$.

To see that \ref{it:closure} is true, assume that $\fri C\neq \emptyset$. Then there exists some $x\in \fri C$. Now for any $y\in C$ by \ref{it:segmentfri} we have $(x,y)\subset \fri C$, and hence $y\in \overline {\fri C}$. Therefore $C\subseteq \overline {\fri C}$, from where \ref{it:closure} follows. 

Finally, we focus on proving \ref{it:idempotent}. By \ref{it:convex} the set $\fri C$ is convex, therefore the composition $\fri(\fri C)$ is well-defined. By the definition of $\fri$, we have $\fri(\fri C)\subseteq \fri C$. It remains to show that $\fri C\subseteq \fri(\fri C)$. For this it is sufficient to demonstrate that $F_{\min}(x,C)\subseteq \overline{F_{\min}(x,\fri C)}$ for any $x\in \fri C$ (indeed, if for any $x\in \fri C$ we have $F_{\min}(x,C)\subseteq\overline{F_{\min}(x,\fri C)}$ then $\fri C\subseteq C\subseteq \overline{F_{\min}(x,C)}\subseteq \overline{F_{\min}(x,\fri C)}$, and so $x\in \fri (\fri C)$). 

Let $y\in F_{\min}(x,C)$. From Proposition \ref{prop:minfaceunion} we have $x\in (y,z)$, where $z$ is also in $F_{\min}(x,C)$. By \ref{it:segmentfri} we conclude that $(y,z)\subset \fri C$. Then by the definition of a face we have $(y,z)\subseteq F_{\min}(x,\fri C)$, and hence $y\in \overline{F_{\min}(x,\fri C)}$, proving the assertion. 
\end{proof}

\begin{remark} Note that the quasi-relative interior satisfies all properties given in Proposition~\ref{prop:basic}. The relevant results can be found in Proposition~2.11 (for \ref{it:convex}), Lemma~2.9 (for \ref{it:segmentfri}),  Proposition~2.12 (for \ref{it:closure}) of \cite{BorweinLewisPartiallyFinite} and Proposition 2.5 (vii) (for \ref{it:idempotent}) of \cite{Regularity}. In the case of the quasi-relative interior, these properties are proved under the assumption that the space $X$ is locally convex. This condition is needed to carry out the proofs that rely on dual objects, which is not required in our case.
\end{remark}

\begin{proposition}[More calculus rules]\label{prop:prodandlin} Let $X$ and $Y$ be topological vector spaces. 
\begin{enumerate}
    \item \label{it:prod} For any pair of convex sets $C \subseteq X$ and $D\subseteq Y$ we have 
    \[
    \fri (C\times D) = \fri C \times \fri D.
    \]
    \item \label{it:linimage} For any continuous linear operator $T:X\to Y$ and any convex set $C\subseteq X$ we have 
    \begin{equation}\label{eq:linear}
    T(\fri C) \subseteq \fri (T(C)).
    \end{equation}
    
\end{enumerate}
   
\end{proposition}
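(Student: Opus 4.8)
The plan is to treat the two items separately, reducing each to an algebraic identity about minimal faces and then passing to closures.

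For item~\ref{it:prod} the plan is to establish first the algebraic identity
\[
F_{\min}\bigl((x,y),\,C\times D\bigr)=F_{\min}(x,C)\times F_{\min}(y,D),\qquad (x,y)\in C\times D,
\]
and then pass to closures coordinatewise. The inclusion ``$\subseteq$'' is immediate once one checks that the product of two faces is a face of the product (if $(x,y)$ lies in the open segment between $(p_1,q_1),(p_2,q_2)\in C\times D$ then either $p_1=p_2$ or $x\in(p_1,p_2)$, and in both cases $p_1,p_2$ lie in any face of $C$ containing $x$; likewise for the second coordinate), so $F_{\min}(x,C)\times F_{\min}(y,D)$ is a face of $C\times D$ containing $(x,y)$. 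For ``$\supseteq$'' I would use Proposition~\ref{prop:minfaceunion}: given $u\in F_{\min}(x,C)$ there are $v\in C$ and $\lambda\in(0,1)$ with $x=\lambda u+(1-\lambda)v$, hence $(x,y)\in\bigl((u,y),(v,y)\bigr)\subseteq C\times D$ and so $(u,y)\in F_{\min}((x,y),C\times D)$; symmetrically $(x,w)\in F_{\min}((x,y),C\times D)$ for every $w\in F_{\min}(y,D)$; then convexity of this face together with one more use of the definition of a face (comparing the midpoint of $(u,w)$ and $(x,y)$ with the midpoint of $(u,y)$ and $(x,w)$) gives $(u,w)\in F_{\min}((x,y),C\times D)$. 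With the identity in hand, and using $\overline{A\times B}=\overline A\times\overline B$ in the product topology together with the elementary fact that for nonempty $C,D$ one has $C\times D\subseteq\overline A\times\overline B$ iff $C\subseteq\overline A$ and $D\subseteq\overline B$, Definition~\ref{def:fri} unwinds directly to $\fri(C\times D)=\fri C\times\fri D$; the cases where $C$ or $D$ is empty are trivial.

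For the inclusion in item~\ref{it:linimage} the key step is the containment $T\bigl(F_{\min}(x,C)\bigr)\subseteq F_{\min}(Tx,T(C))$, valid for every $x\in C$. I would prove it again via Proposition~\ref{prop:minfaceunion}: if $y\in F_{\min}(x,C)$, pick $z\in F_{\min}(x,C)\subseteq C$ and $\lambda\in(0,1)$ with $x=\lambda y+(1-\lambda)z$; applying the linear map $T$ gives $Tx=\lambda Ty+(1-\lambda)Tz$ with $Ty,Tz\in T(C)$, so $Tx\in[Ty,Tz]\subseteq T(C)$ and therefore $Ty\in F_{\min}(Tx,T(C))$ (the case $Ty=Tz$ being trivial since then $Ty=Tx$). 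Now if $x\in\fri C$, continuity of $T$ (so that $T(\overline S)\subseteq\overline{T(S)}$ for every $S\subseteq X$) yields
\[
T(C)\subseteq T\bigl(\overline{F_{\min}(x,C)}\bigr)\subseteq\overline{T\bigl(F_{\min}(x,C)\bigr)}\subseteq\overline{F_{\min}(Tx,T(C))},
\]
which is exactly $Tx\in\fri(T(C))$, establishing \eqref{eq:linear}.

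For the equality under injectivity I would upgrade the previous containment to $T\bigl(F_{\min}(x,C)\bigr)=F_{\min}(Tx,T(C))$: when $T$ is injective on $C$, the restriction $T|_C\colon C\to T(C)$ is a bijection that preserves and reflects convex combinations, so it maps faces of $C$ onto faces of $T(C)$ (if $F\unlhd C$, $Tu\in T(F)$ and $Tu\in(Tv_1,Tv_2)$ with $v_1,v_2\in C$, then injectivity on $C$ gives $u\in(v_1,v_2)$, hence $v_1,v_2\in F$); in particular $T(F_{\min}(x,C))$ is a face of $T(C)$ containing $Tx$, so it contains $F_{\min}(Tx,T(C))$, giving the reverse inclusion. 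Then, given $w\in\fri(T(C))$, let $x$ be its unique preimage in $C$; the relation $w\in\fri(T(C))$ reads $T(C)\subseteq\overline{F_{\min}(w,T(C))}=\overline{T(F_{\min}(x,C))}$, and it remains to deduce $C\subseteq\overline{F_{\min}(x,C)}$, i.e.\ $x\in\fri C$, whence $w=Tx\in T(\fri C)$. \textbf{The main obstacle is precisely this last step:} one must transport the density relation $T(C)\subseteq\overline{T(F_{\min}(x,C))}$ back from $Y$ to $X$, that is, verify $T^{-1}\bigl(\overline{T(F_{\min}(x,C))}\bigr)\cap C\subseteq\overline{F_{\min}(x,C)}$, which means reconciling the $Y$-topology on $T(C)$ with the $X$-topology on $C$ along the continuous bijection $T|_C$. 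I expect to handle this by working with the facial structure of $F_{\min}(x,C)$ (using that injectivity of $T$ on $C$ propagates to $\aff C$ by the absorbing property of $C-C$ in its span, and that $x$ is an intrinsic-core point of $F_{\min}(x,C)$ by Corollary~\ref{lem:eachpointinqri}); the algebraic identifications above are routine, but it is this closure-transfer where the real work of the equality statement lies.
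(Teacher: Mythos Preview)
Your treatment of item~\ref{it:prod} and of the inclusion in item~\ref{it:linimage} is correct and matches the paper exactly: both rest on the identity $F_{\min}((c,d),C\times D)=F_{\min}(c,C)\times F_{\min}(d,D)$ (resp.\ the inclusion $T(F_{\min}(x,C))\subseteq F_{\min}(Tx,T(C))$) followed by passing to closures via continuity, just as you outline.

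For the equality case in item~\ref{it:linimage} you correctly isolate the crux --- transporting the density relation $T(C)\subseteq\overline{T(F_{\min}(x,C))}$ back from the $Y$-topology to the $X$-topology --- and you are candid that you do not have a complete argument for it. The paper's proof does not take a different route that would sidestep this issue; it faces exactly the same obstacle. Concretely, the paper fixes $\hat x\in C$ with $T\hat x\in\fri T(C)$, takes an arbitrary $x\in C$ and an $X$-neighbourhood $U_x$ of $x$, and asserts the existence of $z\in U_x\cap C$ with $T(z)\in F_{\min}(T\hat x,T(C))$, citing density of $F_{\min}(T\hat x,T(C))$ in $T(C)$. But that density is with respect to the $Y$-topology: it gives points of the minimal face in any $Y$-neighbourhood of $T(x)$, not in $T(U_x\cap C)$. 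Without knowing that $T|_C^{-1}$ is continuous (equivalently, that $T|_C$ is a homeomorphism onto its image), there is no reason $T(U_x\cap C)$ should contain a relative $Y$-neighbourhood of $T(x)$ in $T(C)$, so the asserted $z$ need not exist. Once that $z$ is granted, the paper's remaining step --- pulling the segment $T\hat x\in(T z,T y_z)$ back to $\hat x\in(z,y_z)$ via injectivity on $C$ --- is fine and is essentially your observation that $T(F_{\min}(\hat x,C))=F_{\min}(T\hat x,T(C))$ under injectivity.

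In short: your proposal and the paper's proof coincide on the parts that work, and both stop at the same unresolved closure-transfer step for the reverse inclusion. Your hesitation there is well founded; neither argument, as written, closes that gap, and the facial-structure idea you sketch does not obviously provide the missing topological information either.
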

\begin{proof} The equality \ref{it:prod} is easy to see using the observation that $F_{\min}((c,d),C\times D) = F_{\min} (c,C)\times F_{\min}(d,D)$ for any $(c,d)\in C\times D$.

To show \ref{it:linimage}, notice that since the linear mapping $T$ is continuous, the preimages of closed sets must be closed, hence for any subset $S$ of $X$ we must have $T(\overline{S}) \subseteq \overline{T S}$. To complete the proof it remains to show that for any $x\in C$ we have $T(F_{\min}(x, C))\subseteq F_{\min}(T(x), T(C))$, in fact, for any $x\in \fri C$ we have $C\subseteq \overline{F_{\min}(x,C)}$, and hence 
\[
T(C) \subseteq T(\overline{F_{\min}(x,C)}) \subseteq \overline{T(F_{\min}(x,C)})\subseteq \overline{F_{\min}(T(x), T(C))},
\]
therefore $T(x) \in \fri T(C)$.

Now take any $x\in C$, and suppose that $u\in F_{\min}(x,C)$. By Proposition~\ref{prop:minfaceunion} there must exists some $v\in F_{\min}(x,C)$ such that $x\in (u,v)$. Since $T$ is a linear mapping, this means that 
$T(x) \in (T(u),T(v))$, therefore $T(u)\in F_{\min}(T(x), T(C))$.  This concludes the proof.


\end{proof}

\begin{remark} The product rule in Proposition~\ref{prop:prodandlin} \ref{it:prod} also holds for the quasi-relative interior (see \cite[Proposition~2.5]{BorweinLewisPartiallyFinite}). The counterpart of inclusion \ref{it:linimage} for the quasi-relative interior is given in Proposition 2.21 of \cite{BorweinLewisPartiallyFinite}.
\end{remark}

Our next goal is to address a crucial calculus question: Under what conditions does the sum of face relative interiors equal the face relative interior of the sum? It is not difficult to see that translations preserve face relative interiors, as we show next. 

\begin{remark}\label{prop:fritranslation}
Let $C$ be a convex set in a topological vector space $X$, and suppose that $a\in X$, then $\fri (C+a)=\fri C +a$. 
\end{remark}
\proof
Let $x\in  C+a$ then, $x=x_c+a$ with $x_c\in C$. Note that
\begin{equation}\label{minfa}
\overline{F_{\min}(x_c,C)+a}=\overline{F_{\min}(x_c,C)}+a 
\end{equation}. For any face $F$ of $C$ we have that $F\lhd C \Longleftrightarrow F+a\lhd C+a$, which implies that $F_{\min}(x_c+a,C+a)=F_{\min}(x_c,C)+a$. Using the last fact in \eqref{minfa} we have,
$$\overline{F_{\min}(x_c+a,C+a)}=\overline{F_{\min}(x_c,C}+a)=\overline{F_{\min}(x_c,C)}+a.$$
The last equation implies that $C\subseteq \overline{F_{\min}(x_c+a,C+a)}$ is equivalent to $C\subseteq \overline{F_{\min}(x_c,C)}+a$. Then $\fri(C+a)=\fri C +a$. 
\endproof

\begin{proposition}\label{sum}
Let $C$ and $D$ be convex subsets of a topological vector space $X$. Then   
\[
\fri C+\fri D\subseteq \fri(C+D).
\]
\end{proposition}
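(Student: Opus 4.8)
The plan is to pick $x\in\fri C$ and $y\in\fri D$ and show $x+y\in\fri(C+D)$, i.e. that $C+D\subseteq\overline{F_{\min}(x+y,C+D)}$. The whole argument hinges on a single inclusion between minimal faces, which I would isolate as a lemma:
\[
F_{\min}(x,C)+F_{\min}(y,D)\subseteq F_{\min}(x+y,\,C+D)\qquad\text{for all }x\in C,\ y\in D.
\]

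To prove this lemma, take $u\in F_{\min}(x,C)$ and $v\in F_{\min}(y,D)$; by Proposition~\ref{prop:minfaceunion} there are segments $[u,u']\subseteq C$ and $[v,v']\subseteq D$ with $x\in(u,u')$ and $y\in(v,v')$, say $x=(1-s)u+su'$ and $y=(1-t)v+tv'$ with $s,t\in(0,1)$. Put $\lambda:=\max\{s,t\}\in(0,1)$ and
\[
w:=\tfrac{\lambda-s}{\lambda}\,u+\tfrac{s}{\lambda}\,u'+\tfrac{\lambda-t}{\lambda}\,v+\tfrac{t}{\lambda}\,v'.
\]
Since $\tfrac{\lambda-s}{\lambda}+\tfrac{s}{\lambda}=1$ with both coefficients nonnegative (as $\lambda\ge s$), the first two terms form a point of $[u,u']\subseteq C$, and likewise the last two form a point of $[v,v']\subseteq D$, so $w\in C+D$. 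A direct computation gives $x+y=(1-\lambda)(u+v)+\lambda w$, so $x+y$ lies on the segment $[u+v,w]\subseteq C+D$, and Proposition~\ref{prop:minfaceunion} yields $u+v\in F_{\min}(x+y,C+D)$ (the degenerate cases, e.g. $u=x$, $v=y$, or $u+v=w$, being trivial). This proves the lemma.

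With the lemma in hand the rest is short. From $x\in\fri C$ we have $C\subseteq\overline{F_{\min}(x,C)}$, and from $y\in\fri D$ we have $D\subseteq\overline{F_{\min}(y,D)}$. Using the elementary fact that in a topological vector space $\overline{A}+\overline{B}\subseteq\overline{A+B}$ (continuity of addition; cf.\ \cite[Section~5.1]{hitchhiker}) together with the lemma,
\[
C+D\subseteq\overline{F_{\min}(x,C)}+\overline{F_{\min}(y,D)}\subseteq\overline{F_{\min}(x,C)+F_{\min}(y,D)}\subseteq\overline{F_{\min}(x+y,C+D)},
\]
whence $x+y\in\fri(C+D)$.

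I expect the main obstacle to be the lemma, and within it the choice of the auxiliary point $w$: because the parameters $s$ and $t$ produced by the two segments need not coincide, a naive symmetric combination of $u,u',v,v'$ fails, and one is forced to normalise by $\lambda=\max\{s,t\}$ so that $x+y$ genuinely falls in the open segment $(u+v,w)$ while $w$ still lies in $C+D$. The remaining closure manipulation is routine, though the inclusion $\overline{A}+\overline{B}\subseteq\overline{A+B}$ is worth spelling out since the paper does not assume local convexity.
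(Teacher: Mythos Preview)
Your argument is correct. The lemma $F_{\min}(x,C)+F_{\min}(y,D)\subseteq F_{\min}(x+y,C+D)$ is the heart of the matter, and your choice of $\lambda=\max\{s,t\}$ to align the two parametrisations is exactly what is needed; the closure step $\overline{A}+\overline{B}\subseteq\overline{A+B}$ follows from continuity of addition as you say. One tiny remark: Proposition~\ref{prop:minfaceunion} literally places $u$ on some segment $[p,q]\subseteq C$ with $x\in(p,q)$, not directly with $x\in(u,u')$; but since $u$ and $x$ both lie on $[p,q]$ one of the endpoints serves as $u'$, so the passage is immediate.

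The paper takes a different, more modular route: it invokes the product rule $\fri(C\times D)=\fri C\times\fri D$ and the linear-image rule $T(\fri E)\subseteq\fri(T(E))$ from Proposition~\ref{prop:prodandlin}, applied to the addition map $T(x,y)=x+y$. Under the hood this is the same mechanism---composing the identity $F_{\min}((x,y),C\times D)=F_{\min}(x,C)\times F_{\min}(y,D)$ with $T(F_{\min}(\cdot))\subseteq F_{\min}(T(\cdot))$ yields precisely your lemma---so the two proofs are equivalent in content. The paper's version buys reusability (once the product and linear-image rules are in place, the sum rule is a two-line corollary), while yours is self-contained and makes the combinatorics with the segments explicit, which is arguably more transparent for a reader meeting the notion for the first time.
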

\begin{proof} Let $C$ and $D$ be convex subsets of a topological vector space $X$, and let $T:X\times X\to X$ be the addition mapping, $T(x,y) = x+y$. This mapping is linear and continuous.

Using Proposition~\ref{prop:prodandlin} we obtain
\begin{align*}
\fri C + \fri C & = T(\fri C \times \fri D) \quad \text{by the definition of $T$}\\
                & = T(\fri (C\times D)) \quad \text{by Proposition~\ref{prop:prodandlin}~\ref{it:prod}}\\    
                & \subseteq \fri T(C\times D) \quad \text{by Proposition~\ref{prop:prodandlin}~\ref{it:linimage}}\\    
                & =  \fri (C+D)\quad \text{by the definition of $T$}.
\end{align*}
\end{proof}

\begin{remark} An analogue of Proposition~\ref{sum} holds in the case of the quasi-relative interior (see \cite[Lemma~3.6~(b)]{BorweinGoebel}). We refer the reader to  \cite{ZalinescuThreePb} for a nuanced discussion of this property of quasi-relative interiors. We note that the sum of quasi-relative interiors doesn't necessarily coincide with the quasi-relative interior of the sum even when the individual quasi-relative interiors are nonempty, and the  same is true for the face relative interior, as we show in what follows using an example from \cite{ZalinescuThreePb}.
\end{remark}

\begin{example}[Example~2 in  \cite{ZalinescuThreePb}] Let $X=\ell_2$, $\bar{x}=(n^{-1})_{n\geq 1}$, $C=[0,1]\bar{x}$ and $D=\ell_1^+$. 

For any $y\in C+D$ and $n\in \N$ let $y^n$ be the truncated sequence whose first $n$ entries coincide with $y$, and the remaining entries are all zeros. Note that $y^n\in C+D$, since $0\in C$ and $y^n\in l_1^+$. It is also evident that $y^n\to y$. 

For any $x\in l_1^{++}$ there is a suffciently small $\alpha>0$ such that $x + \alpha (x-y^n)$ has strictly positive entries, hence, for any $x\in l_1^{++}$ we have $y^n\in F_{\min}(x,C+D)$. We conclude that $l_1^{++}\subseteq \fri (C+D)$. 

On the other hand, observe that since $\fri C = (0,\bar x)$, we have $l_1^{++}\cap \fri C = \emptyset$, hence $\fri C+\fri D $ doesn't contain elements of $l_1^{++}$. We conclude that $\fri C+\fri D$ is strictly smaller than $\fri (C+D)$. 
\end{example}

The next example shows that the intersection rules that are true for the quasi-relative interiors do not hold for the face relative interior (see \cite[Proposition~3]{ZalinescuThreePb}). Specifically, there are convex sets for which $\fri (C\cap D)\nsubseteq \fri C \cap \fri D$ even under the condition $\fri C \cap \fri D\neq \emptyset$.

\begin{example}\label{eg:intersectionfails} We construct the sets $C$ and $D$ in $l_2$ such that $0\in \fri (C\cap D)$, and at the same time $0\notin  \fri C \cap \fri D\neq \emptyset$. 

Let $v = \left(1,\frac 1 2,\frac 1 3, \dots, \frac 1 n, \dots\right)$ and consider the following subsets of $l_2$:
\begin{align*}
A & = \{ t v + u \,| u\in l_1, t>0\} ,\\
B & = \{ x\in l_1\, | \, x_i \geq -1 \; \forall i \},\\
C & = \{ v+ s ( w-v)\, | \, w\in B , \, s> 0 \},\\
D & = A \cup B.
\end{align*}
First observe that all of these sets are convex, including $D$: both $A$ and $B$ are convex, while for any $x \in A$ and $y\in B$ and any $\alpha \in (0,1)$ we have 
\[
(1-\alpha ) x+ \alpha y = (1-\alpha ) (u+ t v)+ \alpha y =  
(1-\alpha )  t v + (1-\alpha ) u+ \alpha y \in A. 
\]
Another important observation is that $v\in \overline B$ and $v\notin l_1$.

It is also evident by considering the intersections $C\cap A$ and $C\cap B$ that  
\[
C\cap D = \{ v + s (w-v)\, | \, w\in B, \, s\in (0,1]\}.
\]

We will show that $0\in \fri (C\cap D)$, and at the same time $0\notin  \fri C \cap \fri D\neq \emptyset$. 

To show that $\fri C\cap \fri D\neq \emptyset$, we demonstrate that $\frac v 2 \in \fri C$ and $\frac v 2 \in\fri D$.  Take any $x\in C$. Then $x= v+s (w-v)$ for some $w\in B$. Choose $\varepsilon>0$ such that 
\[
\frac 1 2 + \varepsilon \left ( \frac 1 2 -s \right ) > 0, \qquad \varepsilon \left ( s\left ( \|w\|_1+1\right)-\frac 1 2 \right) <\frac 1 2, 
\]
Then 
\[
y = \frac 1 2 v + \varepsilon \left ( \frac 1 2 v -x \right ) = (1-t') v + t' w'\in C,
\] 
where
\[
t' = \frac 1 2 + \varepsilon \left ( \frac 1 2 -s \right ) , \quad w'= \frac{-\varepsilon s \|w\|_1}{t'}\in B.
\]
Therefore $\frac v 2 \in (x, y)$, and hence $x\in F_{\min} (\frac v 2 , C)$. We conclude that $C =  F_{\min} (v/2, C)$, hence $\frac v 2 \in \icr C \subseteq \fri C$ (by Theorem~\ref{thm:interiorsandwich}).

Likewise, for any $x\in D$ we have $x = t v + u$, where $t\geq 0$ and $u\in l_1$. Choose a sufficiently small $\varepsilon$ such that 
\[
\frac 1 2 + \varepsilon \left ( \frac 1 2 -t \right ) > 0,
\]
then 
\[
\frac 1 2 v + \varepsilon \left ( \frac 1 2 v -x \right ) = \left(\frac 1 2 + \varepsilon \left ( \frac 1 2 -t \right ) v - 
\varepsilon u \right)\in A \subset D,
\]
and using the same argument as before, we conclude that $\frac v 2 \in \fri D$.

To show that  $0\in \fri (C\cap D)$, take any $x\in B$, then either $x=0 $ or $-x/\|x\|_1\in B$. We conclude that $B\subseteq F_{\min} (0, C\cap D)$. Since $C\cap D \subseteq \overline B$, we conclude that $0\in \fri C\cap D$. 

To show that $0\notin \fri C\cap \fri D$, it is sufficient to demonstrate that $0\notin \fri D$. It is evident that for any $x\in A$ there is no $y\in D$ such that $0\in (x,y)$ (since $y$ can't have a negative ``$v$ coordinate''). Therefore $ F_{\min} (0, D)\subseteq B$. Now for any $x\in \overline B$ we must have all coordinates of $x$ not smaller than $-1$, and hence, say,  the point $v+(-2,0,0,\dots)$ is in $D$ but is not in $\overline B$. We hence conclude that $D\notin  \overline {F_{\min} (0, D)}$, and $0\notin \fri D$.

\end{example}

Our last calculus result relates the face relative interiors of sets and their closures in a more refined way than \ref{it:closure} of Proposition~\ref{prop:basic}.

\begin{proposition}\label{icrfri}
    Let $A,B$ convex sets in the real topological space $X$, such that $A\subseteq B$, $\icr A=\fri A$ and $A\cap \fri B\neq \emptyset$, then $\fri A\subseteq \fri B$.
\end{proposition}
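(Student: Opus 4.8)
The plan is to exploit the idempotent-like relationship between the minimal face of a point in $A$ and the minimal face of the same point in $B$, together with the hypothesis $\icr A=\fri A$, which is the crucial ingredient that lets us upgrade a face relative interior point of $A$ into a full intrinsic core point of $A$. First I would pick $x\in A\cap\fri B$; by hypothesis this set is nonempty, so such an $x$ exists. The goal is to show $\fri A\subseteq\fri B$, i.e. that every $z\in\fri A$ satisfies $B\subseteq\overline{F_{\min}(z,B)}$. By Corollary~\ref{cor:minfsubset} we have $F_{\min}(z,A)\subseteq F_{\min}(z,B)$ for any $z\in A$, so it suffices to produce, for each $z\in\fri A$, enough of $B$ inside $\overline{F_{\min}(z,A)}$ or inside $\overline{F_{\min}(z,B)}$ directly.

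The key step is the following. Take $z\in\fri A=\icr A$. Then $F_{\min}(z,A)=A$ by \eqref{eq:icrminface}, so in particular $x\in F_{\min}(z,A)$, and by Proposition~\ref{prop:minfaceunion} there is $w\in A$ with $z\in(x,w)$. Since $z\in(x,w)$ and $x,w\in B$, the definition of a face gives $x,w\in F_{\min}(z,B)$, and by convexity of faces the whole segment $[x,w]\subseteq F_{\min}(z,B)$; in particular $x\in F_{\min}(z,B)$. Now I would invoke Proposition~\ref{prop:basic}\ref{it:segmentfri} in the opposite direction: since $x\in\fri B$ and $z\in B$, we get $[x,z)\subseteq\fri B$, but more importantly I want the reverse containment of minimal faces. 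The cleaner route: because $x\in F_{\min}(z,B)$ and $z\in F_{\min}(z,B)$, and $F_{\min}(z,B)$ is a face of $B$, while $x\in\fri B$ means $B\subseteq\overline{F_{\min}(x,B)}$ — but $F_{\min}(x,B)\subseteq F_{\min}(z,B)$ since $x\in F_{\min}(z,B)$ and the minimal face is the smallest face containing $x$. Hence
\[
B\subseteq\overline{F_{\min}(x,B)}\subseteq\overline{F_{\min}(z,B)},
\]
which is exactly $z\in\fri B$. This establishes $\fri A\subseteq\fri B$.

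The main obstacle — and the reason the hypotheses are shaped the way they are — is the step where we need $x\in F_{\min}(z,B)$ for an \emph{arbitrary} $z\in\fri A$. This is precisely where $\fri A=\icr A$ is indispensable: without it, $F_{\min}(z,A)$ could be a proper face of $A$ not containing $x$, and then $x$ need not lie in $F_{\min}(z,B)$ either, breaking the chain. The hypothesis $A\cap\fri B\neq\emptyset$ then guarantees the anchor point $x$ exists, and the monotonicity of minimal faces under inclusion of the generating point (the smallest-face characterisation, equivalently Corollary~\ref{cor:minfsubset} applied within $B$) does the rest. I would double-check the one subtle point that $F_{\min}(x,B)\subseteq F_{\min}(z,B)$ follows purely from $x\in F_{\min}(z,B)$ together with the fact that $F_{\min}(z,B)$ is itself a face of $B$ (so it contains the minimal face of any of its points), which is immediate from the lattice structure of faces; no topology is needed until the final passage to closures.
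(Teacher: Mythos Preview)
Your proof is correct and follows essentially the same route as the paper. The paper picks $x_0\in\fri A=\icr A$, uses the definition of intrinsic core to place $x_0$ strictly between the anchor $\bar x\in A\cap\fri B$ and some $\bar y\in A\subseteq B$, and then invokes Proposition~\ref{prop:basic}\ref{it:segmentfri} to conclude $x_0\in[\bar x,\bar y)\subseteq\fri B$; you do the same construction and then, instead of citing \ref{it:segmentfri}, unpack its proof via the chain $F_{\min}(x,B)\subseteq F_{\min}(z,B)$, which is exactly how \ref{it:segmentfri} is established in the paper. One cosmetic point: to get $w\in A$ with $z\in(x,w)$ it is cleaner to appeal directly to the definition of $\icr$ (as the paper does) rather than to Proposition~\ref{prop:minfaceunion}, since the latter only tells you $x$ lies on \emph{some} segment through $z$, and you then need an extra line to rearrange endpoints.
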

\begin{proof}
    Consider $x_0\in \fri A=\icr A$, then  from the definition of intrinsic core (see page~\ref{page:deficr} or Definition~3.1 in \cite{mill-rosh}), for all $x\in A$ there exist $y\in A$ with $x_0\in(y,x)$. Take $\bar{x}\in A\cap \fri B$, there exist $\bar{y}\in A\subseteq B$ such that $x_0\in (\bar{x},\bar{y})$. Now, since $\bar{y}\in  B$  and $\bar{x}\in A\subseteq B$, using Proposition \ref{prop:basic}\ref{it:segmentfri} we have that $[\bar{x},\bar{y})\subset\fri B$, which implies that $x_0\in \fri B$, the arbitrarily of $x_0$ implies that $\fri A\subseteq\fri B$. 
\end{proof}
\begin{proposition}[Intersection property]\label{prop:intersection} Let $C$ and $D$ be nonempty convex sets in a real topological space $X$ such that $\icr (C\cap D)=\fri (C\cap D)$. Then
\begin{enumerate} 
    \item If $C\cap \fri D\neq \emptyset$, then $\fri (C\cap D)\subseteq C\cap \fri D$.
    \item If $\fri C\cap \fri D\neq \emptyset$, then $\fri (C\cap D)\subseteq \fri C\cap \fri D$.
\end{enumerate}
    
\end{proposition}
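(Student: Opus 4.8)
The plan is to reduce both parts of Proposition~\ref{prop:intersection} to Proposition~\ref{icrfri} by taking $A = C\cap D$ and choosing $B$ appropriately. For part (i), set $B = D$. Then $A = C\cap D\subseteq D = B$, the hypothesis $\icr(C\cap D) = \fri(C\cap D)$ gives $\icr A = \fri A$, and the hypothesis $C\cap\fri D\neq\emptyset$ together with $A = C\cap D$ gives $A\cap\fri B = (C\cap D)\cap\fri D \neq\emptyset$ — here one needs the elementary observation that $\fri D\subseteq D$ (Proposition~\ref{prop:friconvex}, or rather its face-relative-interior analogue built into Definition~\ref{def:fri}), so that any point of $C\cap\fri D$ lies in $C\cap D\cap \fri D = A\cap\fri B$. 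Proposition~\ref{icrfri} then yields $\fri A\subseteq\fri B$, i.e. $\fri(C\cap D)\subseteq\fri D$; combined with the trivial inclusion $\fri(C\cap D)\subseteq C\cap D\subseteq C$, this gives $\fri(C\cap D)\subseteq C\cap\fri D$.

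For part (ii), the hypothesis $\fri C\cap\fri D\neq\emptyset$ is stronger than $C\cap\fri D\neq\emptyset$, so part (i) applies and gives $\fri(C\cap D)\subseteq C\cap\fri D\subseteq\fri D$. By symmetry — swapping the roles of $C$ and $D$, which is legitimate since the hypothesis $\icr(C\cap D)=\fri(C\cap D)$ is symmetric in $C$ and $D$, as is $\fri C\cap\fri D\neq\emptyset$ — the same argument gives $\fri(C\cap D)\subseteq\fri C$. Intersecting the two inclusions yields $\fri(C\cap D)\subseteq\fri C\cap\fri D$, as desired.

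The steps in order are therefore: (1) record that $\fri D\subseteq D$ and $\fri(C\cap D)\subseteq C\cap D$; (2) apply Proposition~\ref{icrfri} with $A=C\cap D$, $B=D$ to prove (i); (3) derive (ii) from (i) plus the symmetric instance of (i). I expect no serious obstacle here: the content has been front-loaded into Proposition~\ref{icrfri} (itself a short consequence of Proposition~\ref{prop:basic}\ref{it:segmentfri}). The only point requiring a moment's care is verifying that the hypotheses of Proposition~\ref{icrfri} are met — in particular that $A\cap\fri B\neq\emptyset$ translates correctly, and that in part (ii) the symmetry argument is applied to hypotheses that are genuinely symmetric in $C$ and $D$ (both $\icr(C\cap D)=\fri(C\cap D)$ and $\fri C\cap\fri D\neq\emptyset$ are).
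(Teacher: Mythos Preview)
Your proposal is correct and follows essentially the same route as the paper: part~(i) is obtained by applying Proposition~\ref{icrfri} with $A=C\cap D$ and $B=D$ (the paper states this in one line, while you spell out the verification that $A\cap\fri B\neq\emptyset$ and append the trivial inclusion $\fri(C\cap D)\subseteq C$), and part~(ii) is deduced from two applications of part~(i) using the symmetry in $C$ and $D$, exactly as in the paper.
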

\begin{proof}
\begin{enumerate}
    \item This is a direct consequence of Proposition \ref{icrfri} with $A=C\cap D$ and $B=D$.
    \item Since $\fri C\cap\fri D\neq \emptyset$, then  $C\cap\fri D\neq \emptyset$ and $D\cap\fri C\neq\emptyset$, using twice the previous item, we have that $\fri(C\cap D)\subseteq C\cap\fri D$ and $\fri(C\cap D)\subseteq D\cap\fri C$, which implies that $$\fri(C\cap D)\subseteq \big(C\cap\fri D\big)\cap \big(D\cap\fri C\big)=\fri C\cap\fri D.$$
\end{enumerate}
\end{proof}

We finish this section with a characterisation of face relative interior that we then use to prove some relations between face relative interiors of convex sets and their closures in Proposition~\ref{prop:closures}.

\begin{theorem}\label{notinfri} Let $X$ be a topological vector space. Given a convex set $C\subset X$ and $x_0\in C$, we have $x_0\notin \fri C$ if and only if there exist $y\in C$ and a neighbourhood $\mathrel{V_y}$ of $y$ such that for all $z\in \mathrel{V_y} \cap C$, the direction $x_0 -z$ is a not feasible direction, that is, for all $\epsilon > 0$, $x_0+\epsilon(x_0-z)\notin C$.
\end{theorem}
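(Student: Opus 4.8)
The statement is a negation of the defining condition $C\subseteq\overline{F_{\min}(x_0,C)}$, so the natural first move is to rephrase membership in $\fri C$ via Proposition~\ref{prop:minfaceunion}: we have $x_0\in\fri C$ if and only if every $y\in C$ lies in $\overline{F_{\min}(x_0,C)}$, i.e. every neighbourhood of every $y\in C$ meets $F_{\min}(x_0,C)$, and by Proposition~\ref{prop:minfaceunion} a point $z\in C$ lies in $F_{\min}(x_0,C)$ exactly when there is some $w\in C$ with $x_0\in(z,w)$. The key elementary observation linking this to feasible directions is that, for $z\in C$ with $z\neq x_0$, the existence of $w\in C$ with $x_0\in(z,w)$ is equivalent to $x_0+\epsilon(x_0-z)\in C$ for some $\epsilon>0$: indeed, $w=x_0+\tfrac{1-\alpha}{\alpha}(x_0-z)$ when $x_0=\alpha z+(1-\alpha)w$, and conversely if $x_0+\epsilon(x_0-z)=w\in C$ then $x_0=\tfrac{1}{1+\epsilon}w+\tfrac{\epsilon}{1+\epsilon}z\in(z,w)$. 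So $z\in F_{\min}(x_0,C)$ iff $x_0-z$ is a feasible direction at $x_0$ (the case $z=x_0$ being trivially in the minimal face and trivially feasible in a vacuous sense, which I will handle separately or absorb).

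With this dictionary in place the proof is a routine contrapositive unfolding of quantifiers. For the ``if'' direction: suppose there is $y\in C$ and a neighbourhood $V_y$ of $y$ with $x_0-z$ not feasible for every $z\in V_y\cap C$; then by the dictionary $V_y\cap C$ is disjoint from $F_{\min}(x_0,C)$, so $y\notin\overline{F_{\min}(x_0,C)}$, hence $C\not\subseteq\overline{F_{\min}(x_0,C)}$ and $x_0\notin\fri C$. For the ``only if'' direction: if $x_0\notin\fri C$, then by definition there is $y\in C$ with $y\notin\overline{F_{\min}(x_0,C)}$; since the complement of the closure is open, there is a neighbourhood $V_y$ of $y$ with $V_y\cap F_{\min}(x_0,C)=\emptyset$, and then by the dictionary every $z\in V_y\cap C$ has $x_0-z$ infeasible, which is exactly the asserted condition.

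The only genuinely delicate point is the treatment of the point $z=x_0$ itself (and more generally whether $x_0$ could belong to the chosen $V_y$): if $x_0\in V_y\cap C$ then the statement ``$x_0+\epsilon(x_0-z)\notin C$ for all $\epsilon>0$'' reads ``$x_0\notin C$'', which is false. I would resolve this by noting that $x_0\in F_{\min}(x_0,C)$ always, so whenever $y\notin\overline{F_{\min}(x_0,C)}$ we automatically have $y\neq x_0$ and can shrink $V_y$ to exclude $x_0$; symmetrically, the neighbourhood $V_y$ produced in the ``if'' direction already avoids $F_{\min}(x_0,C)\ni x_0$. Thus $x_0\notin V_y$ in both directions and the equivalence ``$z\in F_{\min}(x_0,C)\iff x_0-z$ feasible'' applies cleanly to every $z\in V_y\cap C$. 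I expect this bookkeeping, rather than any substantive topological or convexity argument, to be the main thing to get right; everything else is a direct translation through Proposition~\ref{prop:minfaceunion} and the definition of $\fri C$.
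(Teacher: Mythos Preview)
Your proposal is correct and follows essentially the same approach as the paper: both arguments pivot on the equivalence (via Proposition~\ref{prop:minfaceunion}) between $z\in F_{\min}(x_0,C)$ and feasibility of the direction $x_0-z$, and then unfold the closure condition in the definition of $\fri C$ into a neighbourhood statement. Your treatment is in fact slightly more careful than the paper's in explicitly handling the degenerate case $z=x_0$ by shrinking $V_y$ away from $x_0$; the paper's proof leaves this point implicit.
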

We provide a geometric illustration of Theorem~\ref{notinfri} and its proof in Fig.~\ref{fig:technical}.
\begin{figure}[ht]
    \centering
    \includegraphics[width=0.5\textwidth]{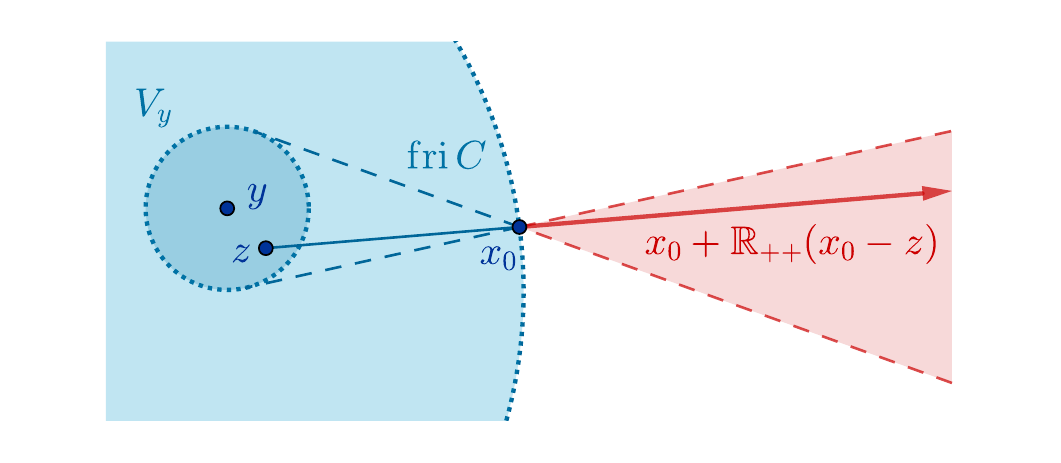}
    \caption{A technical construction in the proof of Theorem~\ref{notinfri}.}
    \label{fig:technical}
\end{figure}
\begin{proof}[Proof of Theorem~\ref{notinfri}]
Suppose that $x_0\in C\setminus  \fri C$. Then $C\nsubseteq \overline{F_{\min}(x_0,C)}$ and there exists $y\in C$ such that $y\notin \overline{F_{\min}(x_0,C)}$, which implies the existence of a neighborhood $\mathrel{V_y}$ such that, $\mathrel{V_y}\cap F_{\min}(x_0,C)=\emptyset$. For any $z\in \mathrel{V_y} \cap C$ and $\epsilon>0$ we have $x_0\in (z,x_0+\epsilon (x_0-z))$, which together with $z\notin F_{\min}(x_0,C)$ and \eqref{minface:union} yields that  for all $\epsilon>0$ the point $x_0+\epsilon(x_0-z)\notin C$. 

Conversely, for some $x_0\in C$ let $y\in C$ and $\mathrel{V_y}$ be such that for all $\epsilon>0$ and $z\in \mathrel{V_y}\cap C$, $x_0+\epsilon(x_0-z)\notin C$. Then for all $z\in \mathrel{V_y}\cap C$ (as well as for $z\notin C$) we have $z\notin F_{\min}(x_0,C)$, and therefore $y\notin \overline{F_{\min}(x_0,C)}$.
\end{proof}

\begin{proposition}\label{prop:closures}
Given convex sets $A, B$ such that $A\subseteq B\subseteq \overline{A}$. Then we have:
\begin{enumerate}
\item \label{it:a} $\fri A \subseteq \fri B\subseteq \fri \overline{A}$.
\item \label{it:b} If $A$ is an open or a closed set, then $\fri A=A\cap \fri B=A\cap \fri \overline{A}$.
\end{enumerate}
\end{proposition}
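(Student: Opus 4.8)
The plan is to derive part \ref{it:a} from the monotonicity of minimal faces, and then to reduce part \ref{it:b} to a single reverse inclusion. The standing hypothesis $A\subseteq B\subseteq\overline A$ forces $\overline B=\overline A$. For the first inclusion in \ref{it:a}, take $x\in\fri A$; then $x\in A\subseteq B$ and $A\subseteq\overline{F_{\min}(x,A)}$. By Corollary~\ref{cor:minfsubset} we have $F_{\min}(x,A)\subseteq F_{\min}(x,B)$, so $A\subseteq\overline{F_{\min}(x,B)}$; since the right-hand side is closed this yields $\overline A\subseteq\overline{F_{\min}(x,B)}$, and as $B\subseteq\overline A$ we get $B\subseteq\overline{F_{\min}(x,B)}$, i.e. $x\in\fri B$. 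The second inclusion $\fri B\subseteq\fri\overline A$ is identical: for $x\in\fri B$, Corollary~\ref{cor:minfsubset} applied to $B\subseteq\overline A$ (closures of convex sets being convex) gives $B\subseteq\overline{F_{\min}(x,\overline A)}$, hence $\overline A=\overline B\subseteq\overline{F_{\min}(x,\overline A)}$, so $x\in\fri\overline A$.

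For part \ref{it:b} I would first record the elementary forward chain. Since $\fri A\subseteq A$ always holds, combining this with \ref{it:a} gives $\fri A\subseteq A\cap\fri B$ and $\fri A\subseteq A\cap\fri\overline A$; moreover $\fri B\subseteq\fri\overline A$ yields $A\cap\fri B\subseteq A\cap\fri\overline A$. Thus
\[
\fri A\subseteq A\cap\fri B\subseteq A\cap\fri\overline A,
\]
and to collapse this chain into the two claimed equalities it suffices to establish the single reverse inclusion
\[
A\cap\fri\overline A\subseteq\fri A.
\]

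This last inclusion is the heart of the statement and the step I expect to be the main obstacle: it is precisely where the passage from $\overline A$ back to $A$ becomes delicate, since enlarging a convex set to its closure can only enlarge the minimal face of a point, so the naive containment $\overline{F_{\min}(x,\overline A)}\subseteq\overline{F_{\min}(x,A)}$ cannot be expected in general. The natural route is the contrapositive via Theorem~\ref{notinfri}: assuming $x\in A\setminus\fri A$, that theorem supplies a point $y\in A$ and a neighbourhood $V_y$ such that the direction $x-z$ is infeasible in $A$ for every $z\in V_y\cap A$, and the aim is to produce a witness showing $x\notin\fri\overline A$. I would attempt to reuse the same pair $(y,V_y)$, reducing the problem to showing that for every $z'\in V_y\cap\overline A$ and every $\epsilon>0$ one has $x+\epsilon(x-z')\notin\overline A$; approximating $z'$ by a net $z_\gamma\in V_y\cap A$ and invoking the infeasibility in $A$ at each $z_\gamma$ controls the extended points $x+\epsilon(x-z_\gamma)$, which all lie outside $A$.

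The crucial and most demanding point is upgrading this to $x+\epsilon(x-z')\notin\overline A$, i.e. ruling out that a forbidden extension re-enters the closure in the limit. Here one must genuinely exploit the hypothesis $x\in\fri\overline A$ (rather than merely $x\in A$), using the density $\overline A=\overline{F_{\min}(x,\overline A)}$ together with the intrinsic-core structure of $F_{\min}(x,\overline A)$ from Corollary~\ref{lem:eachpointinqri}, in order to coordinate the approximation so that feasibility of a direction at $x$ relative to $\overline A$ transfers to feasibility of a nearby direction relative to $A$. I would isolate this transfer as a standalone lemma comparing feasible directions at $x$ in $A$ and in $\overline A$ under the standing hypothesis; once such a lemma is in hand the contrapositive closes, giving $A\cap\fri\overline A\subseteq\fri A$, and the chain above collapses to $\fri A=A\cap\fri B=A\cap\fri\overline A$, completing part \ref{it:b}.
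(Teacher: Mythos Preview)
Your treatment of part \ref{it:a} and of the forward chain $\fri A\subseteq A\cap\fri B\subseteq A\cap\fri\overline A$ in part \ref{it:b} is correct and essentially identical to the paper's argument. For the remaining reverse inclusion $A\cap\fri\overline A\subseteq\fri A$ you also set out on the paper's route: argue by contradiction, apply Theorem~\ref{notinfri} to $x\notin\fri A$ to obtain a witness pair $(y,V_y)$, and then seek to contradict $x\in\fri\overline A$.

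The genuine gap is that you stop exactly where the argument has content. You correctly isolate the task of upgrading ``$x+\epsilon(x-z)\notin A$ for all $z\in V_y\cap A$ and all $\epsilon>0$'' to an infeasibility statement in $\overline A$, but you do not carry it out; you only announce that you would ``isolate this transfer as a standalone lemma'' and speculate about possible ingredients (density of $F_{\min}(x,\overline A)$ in $\overline A$, Corollary~\ref{lem:eachpointinqri}). As written, this is an outline of part \ref{it:b}, not a proof. The paper, by contrast, makes the passage directly: from the neighbourhood infeasibility in $A$ it asserts $x+\epsilon(x-y)\notin\overline A$ for every $\epsilon>0$, and from this concludes $x\notin\fri\overline A$, giving the contradiction. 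Your suggested detour through the intrinsic-core structure of $F_{\min}(x,\overline A)$ is not the paper's path, and you give no concrete mechanism by which those tools would close the gap.
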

\begin{proof}
 Note that given a point $x\in A$, since $A\subseteq B \subseteq \overline{A}$ we have by Corollary~\ref{cor:minfsubset} that $F_{\min}(x,A) \subseteq F_{\min}(x,B)\subseteq F_{\min}(x,\overline{A})$.

To show \ref{it:a} take $x\in \fri A$ then, $A\subseteq \overline{F_{\min}(x,A)}$, now since $\overline{F_{\min}(x,A)}$ is closed, we have that $A\subseteq B \subseteq \overline{A}\subseteq  \overline{F_{\min}(x,A)}\subseteq  \overline{F_{\min}(x,B)} \subseteq \overline{F_{\min}(x,\overline{A})}$, proving that $x\in \fri B \subseteq \fri \overline{A}$.

To demonstrate \ref{it:b}, notice that due to \ref{it:a} we have that $\fri A \subseteq A\cap \fri B \subseteq A\cap \fri \overline{A}$. We will prove that $A\cap \fri \overline{A}\subseteq \fri A$ which completes the proof. If $A$ is closed, we have $A=\overline{A}$ and the proof is trivial. Suppose that $A$ is an open set. Take $x\in A\cap \fri \overline{A}$, suppose that $x\notin \fri A$. Using Theorem \ref{notinfri}, we have the existence of $y\in A$ and an open neighbourhood $V_y$ of $y$, such that for all $z\in A\cap V_y$, we have $x+\epsilon (x-z)\notin A$ for all $\varepsilon>0$. Define the continuous function $F_{x,\varepsilon}:X\rightarrow X$ as $F_{x,\varepsilon}(z):=x+\varepsilon(x-z)$, notice that the set $U^y_{x,\varepsilon}:=F_{x,\varepsilon}(V_y\cap A)$ is an open set because $F_{x,\varepsilon}$ is an homeomorphism for fixed $\varepsilon$ and $x$, and that $A\cap V_y$ is open, also $U^y_{x,\varepsilon}\cap A=\emptyset$ and $x+\varepsilon(x-y)\in U^y_{x,\varepsilon}$. This implies that $x+\epsilon(x-y)\notin \overline{A}$ which gets that $x\notin \fri \overline{A}$, contradicting our assumption. We conclude that $\fri A=A\cap \fri \overline{A}$.
\end{proof}

\section{Equivalence classes of faces}\label{sec:equiv}

Recall that any convex set in a real vector space has a beautiful and useful representation as the disjoint union of intrinsic cores of its faces. This result is tied up to the fact that any point of a convex set belongs to the intrinsic core of its minimal face and that the intersection of intrinsic cores of any pair of distinct faces is empty. Neither the quasi-relative interior nor the face relative interior satisfies these properties. In both cases, two distinct faces may have a nonempty intersection of their quasi- or face relative interiors. 

It may be possible however to obtain a disjoint decomposition of a convex set into the face relative interiors of what we call closure-equivalent faces of a convex set. Such decomposition is not available in terms of the quasi-relative interiors. 

\begin{definition}\label{defeqiv}
Given the convex sets $A,B\subseteq X$, where $X$ is a topological vector space, we say that $A$ is closure equivalent to $B$, denoted by $A \simeq B$, if $\overline{A}=\overline{B}$. The equivalence class of the face $F$ of a convex set $C$, is defined by $[F]^C:=\{A\unlhd C: \bar{A}=\bar{F}\}$. 
\end{definition}
\begin{proposition}\label{partition}
Let $C\subset X$ be a convex set in a topological vector space $X$. Let $A,B\subseteq C$ two faces of $C$, such that $A$ and $B$ are not closure equivalent, that is, $\overline{A}\neq \overline{B}$. Then $\fri A\cap \fri B=\emptyset$.  
\end{proposition}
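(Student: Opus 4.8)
The plan is to argue by contradiction: suppose $A$ and $B$ are faces of $C$ with $\fri A \cap \fri B \neq \emptyset$, and show that this forces $\overline{A} = \overline{B}$. Pick $x_0 \in \fri A \cap \fri B$. By definition of face relative interior, $A \subseteq \overline{F_{\min}(x_0,A)}$ and $B \subseteq \overline{F_{\min}(x_0,B)}$. The first key observation is that the minimal face of $x_0$ inside $A$ and the minimal face of $x_0$ inside $B$ are in fact the \emph{same} face of $C$: since $A$ is a face of $C$ and $x_0 \in A$, every face of $C$ containing $x_0$ that is compared with $A$ behaves well — more precisely, $F_{\min}(x_0, C) \unlhd A$ because $A$ is a face of $C$ containing $x_0$, and conversely $F_{\min}(x_0,A) = F_{\min}(x_0,C)$ since a face of a face is a face of the whole set (faces are transitive in the facial lattice). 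The same holds with $B$ in place of $A$. Hence $F_{\min}(x_0,A) = F_{\min}(x_0,C) = F_{\min}(x_0,B)$; call this common face $F$.

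Now $A \subseteq \overline{F}$ and $B \subseteq \overline{F}$, so $\overline{A} \subseteq \overline{F}$ and $\overline{B} \subseteq \overline{F}$. On the other hand, $F = F_{\min}(x_0,A) \subseteq A$, so $\overline{F} \subseteq \overline{A}$, and likewise $\overline{F} \subseteq \overline{B}$. Combining these inclusions gives $\overline{A} = \overline{F} = \overline{B}$, which contradicts the hypothesis that $A$ and $B$ are not closure equivalent. Therefore $\fri A \cap \fri B = \emptyset$.

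The only step that requires a little care is the transitivity claim $F_{\min}(x_0,A) = F_{\min}(x_0,C)$ when $A \unlhd C$ and $x_0 \in A$. This follows from the standard fact that if $A \unlhd C$ then the faces of $A$ are exactly the faces of $C$ contained in $A$: indeed $F_{\min}(x_0,C)$ is contained in $A$ (since $A$ is a face of $C$ containing $x_0$ and $F_{\min}(x_0,C)$ is the smallest such), and it is a face of $A$, hence it contains $F_{\min}(x_0,A)$; conversely $F_{\min}(x_0,A)$ is a face of $C$ containing $x_0$, so it contains $F_{\min}(x_0,C)$. Alternatively, one can read this directly off the union representation in Proposition~\ref{prop:minfaceunion}, since the segments $[y,z] \subseteq C$ with $x_0 \in (y,z)$ automatically satisfy $y,z \in A$ when $A$ is a face of $C$. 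I expect this transitivity lemma to be the main (minor) obstacle; everything else is a short chain of closure inclusions. Note that separability of $X$ plays no role in the argument — it is presumably kept in the statement only for consistency with the surrounding partition discussion.
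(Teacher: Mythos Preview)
Your proof is correct and, in fact, somewhat cleaner than the paper's. Both arguments proceed by contradiction from a point $x_0\in\fri A\cap\fri B$ and both ultimately rest on transitivity of the face relation, but they route through different intermediate objects. The paper works with the intersection $A\cap B$: it picks a witness $y\in A\cup B$ with $y\notin\overline A\cap\overline B$, observes that $F_{\min}(x_0,A\cap B)$ is a face of $A$ (and of $B$) containing $x_0$, so that $F_{\min}(x_0,A)\subseteq F_{\min}(x_0,A\cap B)\subseteq\overline A\cap\overline B$, and concludes that $y$ lies outside $\overline{F_{\min}(x_0,A)}$, contradicting $x_0\in\fri A$. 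You instead go through the ambient set $C$, identifying $F_{\min}(x_0,A)=F_{\min}(x_0,C)=F_{\min}(x_0,B)=:F$, and then read off $\overline A=\overline F=\overline B$ from the two-sided inclusions $F\subseteq A\subseteq\overline F$ and $F\subseteq B\subseteq\overline F$. Your route has the advantage of proving the contrapositive outright (closure equivalence follows, not merely its negation fails), and it makes transparent why the separable Banach hypothesis is irrelevant---a point the paper's statement leaves implicit.
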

\proof Let $A$ and $B$ be faces of $C$ such that $\bar A \neq \bar B$. Assume that there is an $x\in \fri A\cap \fri B$. This implies, by definition, that $x\in A\cap B$. Now, since $\overline{A}\neq \overline{B}$ there exist $y\in A\cup B$ such that $y\notin \overline{A}\cap \overline{B}$. Note that $\overline{F_{\min}(x,A\cap B)}\subseteq \overline{A\cap B}\subseteq \overline{A}\cap \overline{B}$, then $y\notin \overline{F_{\min}(x,A\cap B)}$. Suppose, without loss of generality, that $y\in A$, using that $F_{\min}(x,A\cap B)$ is a face of $A$ containing $x$ (since $A\cap B$ is a face of $C$ as the intersection of faces of $C$, it is also a face of $A$, likewise any face of $A\cap B$ is a face of $A$), then $F_{\min}(x,A)\subseteq F_{\min}(x,A\cap B)$, then $A\nsubseteq \overline{F_{\min}(x,A)}$  contradicting that $x\in \fri A$. This proves that our assumption was wrong and we must have $\fri A\cap \fri B = \emptyset$.
\endproof
 
Note that Proposition~\ref{partition} is not valid when the face relative interior is replaced by the quasi-relative interior, as we demonstrate in the next example. First, we recall a result about the intrinsic cores of minimal faces (for a proof see \cite[Corollary~3.14]{mill-rosh}).

\begin{proposition}\label{prop:icrminf} Let $C$ be a convex subset of a real vector space $X$, and let $F$ be a face of $C$. Then $F = F_{\min}(x,C)$ if and only if $x \in \icr F$.
\end{proposition}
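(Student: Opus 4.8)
The statement to prove is Proposition~\ref{prop:icrminf}: for a face $F$ of a convex set $C$, we have $F = F_{\min}(x,C)$ if and only if $x \in \icr F$. The plan is to prove the two implications separately, exploiting the characterisation \eqref{eq:icrminface} of the intrinsic core via minimal faces together with the transitivity of the face relation (if $G\unlhd F$ and $F\unlhd C$ then $G\unlhd C$) and its converse behaviour on sub-faces.

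\emph{Forward direction.} Suppose $F = F_{\min}(x,C)$. I would first note that $x \in F$, so by \eqref{eq:icrminface} it suffices to show that $F_{\min}(x,F) = F$. Since $F$ is itself a convex set, $F_{\min}(x,F)$ is a face of $F$ containing $x$. The key observation is that any face of a face is a face of the original set: $F_{\min}(x,F)\unlhd F\unlhd C$ implies $F_{\min}(x,F)\unlhd C$, and this face contains $x$. By minimality of $F_{\min}(x,C)=F$ among faces of $C$ containing $x$, we get $F \subseteq F_{\min}(x,F)$, and the reverse inclusion is immediate since $F_{\min}(x,F)\subseteq F$. Hence $F_{\min}(x,F)=F$, i.e. $x\in\icr F$ by \eqref{eq:icrminface}.

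\emph{Reverse direction.} Suppose $x \in \icr F$, so by \eqref{eq:icrminface} we have $F_{\min}(x,F) = F$. I need to show $F_{\min}(x,C) = F$. Since $x\in F$ and $F\unlhd C$, the set $F$ is a face of $C$ containing $x$, so by minimality $F_{\min}(x,C) \subseteq F$. For the reverse inclusion, observe that $F_{\min}(x,C)$ is a face of $C$ contained in $F$; I claim it is in fact a face of $F$. This follows because any face of $C$ that is contained in $F$ is automatically a face of $F$ — the defining condition (for $u$ in the subface and $y,z\in F\subseteq C$ with $u\in(y,z)$ we need $y,z$ in the subface) is a restriction of the condition with $y,z$ ranging over all of $C$. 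So $F_{\min}(x,C)$ is a face of $F$ containing $x$, whence $F = F_{\min}(x,F)\subseteq F_{\min}(x,C)$ by minimality of the latter among faces of $F$ containing $x$. Combining the inclusions gives $F_{\min}(x,C)=F$.

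The argument is essentially a bookkeeping exercise with the lattice of faces; the only point requiring any care is the two small transitivity-type lemmas about faces (a face of a face is a face, and a face of $C$ lying inside a face $F$ is a face of $F$), both of which are routine consequences of the definition of a face given on the first page of Section~\ref{sec:facereldef}. Since the paper may already take these for granted, I expect no genuine obstacle; alternatively, one could phrase the whole proof directly through Proposition~\ref{prop:minfaceunion}, characterising $F_{\min}(x,\cdot)$ as the union of all segments $[y,z]$ with $x\in(y,z)$, which makes the inclusions even more transparent.
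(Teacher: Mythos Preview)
Your proof is correct. Both directions are handled cleanly: the forward direction uses transitivity of the face relation to push $F_{\min}(x,F)$ down to a face of $C$ and then invoke minimality, and the reverse direction uses the observation that a face of $C$ contained in $F$ is a face of $F$ to go the other way. These two lattice facts are standard and, as you note, follow straight from the definition of a face.

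As for comparison with the paper: there is nothing to compare. The paper does not prove Proposition~\ref{prop:icrminf}; it merely quotes it as a known fact, citing \cite[Corollary~3.14]{mill-rosh}. Your argument therefore supplies a self-contained proof where the paper offers none. It is also worth noting that the paper already uses this result implicitly in Corollary~\ref{lem:eachpointinqri} (via the statement $x\in\icr F_{\min}(x,C)$ from the same reference), so your proof would slot in naturally.
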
 

\begin{example}\label{nonpartition}
Suppose $C\subset l_2$ is like in Example \ref{exa1}, that is, $C = \{x\, |\, \|x\|_1 \leq 1\}$. Take $\bar{x}\in C$ such that $\|\bar{x}\|_1=1$, and such that $\bar x$ has an infinite number of nonzero entries, and assume for simplicity that all entries of $\bar x$ are nonnegative. We have demonstrated earlier (see Example~\ref{exa1}) that in this case 
\[
F_{\min }(\bar x,C) \subseteq C\cap \left\{x\,|\, \sum_{i=1}^\infty x_i =1, x_i \geq 0 \; \forall i \in \N\right\},
\] and that $\overline{F_{\min}(\bar x,C)}\nsupseteq C$. 
By Proposition~\ref{prop:icrminf} $\bar x\in \icr F_{\min}(x,C)$, also $\icr F_{\min}(x,C) \subseteq \qri F_{\min}(x,C)$ by Theorem~\ref{thm:interiorsandwich}. We hence have $\bar x \in \qri F_{\min}(x,C)$. At the same time, we know from the representation \eqref{eq:qriEx1} that $\bar x\in \qri C$. We conclude that $\bar{x}\in \qri F_{\min}(\bar{x},C)\cap \qri C$, hence the quasi-relative interiors of faces that aren't closure-equivalent may have nonempty intersections. 
\end{example}

We can therefore obtain the representation of a convex set as the disjoint union of the face relative interiors of its closure-equivalent faces, as we demonstrate next.

\begin{theorem}\label{thm:decompose} Let $C$ be a convex subset of a topological vector space $X$, and let $\mathcal F_C$ be the set of closure-equivalent classes of faces of $C$. Then 
\begin{equation}\label{eq:disjoint}
C  = \dbigcup_{[F]^C\in \mathcal F_C}  \bigcup_{E\in [F]^C}\fri E,
\end{equation}
where by dot over the union sign we denote that the union is disjoint.    
\end{theorem}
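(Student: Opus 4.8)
The plan is to establish two things: first, that every point of $C$ lies in $\fri F$ for some face $F$ of $C$ (so the union on the right covers $C$), and second, that the sets $\fri F$ for distinct classes $[F]\in\mathcal F_C$ are pairwise disjoint, and that within a single class the face relative interiors agree, so that $\fri F$ depends only on $[F]$ and the union is genuinely an indexed-by-classes disjoint union.

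For the covering, I would take an arbitrary $x\in C$ and set $F=F_{\min}(x,C)$. By Corollary~\ref{lem:eachpointinqri} we have $x\in\icr F_{\min}(x,C)\subseteq\fri F_{\min}(x,C)=\fri F$, so $x$ belongs to $\fri F$ for this particular face $F$, and hence to the right-hand side of \eqref{eq:disjoint}. Conversely, each $\fri F$ is a subset of $F\subseteq C$, so the right-hand side is contained in $C$; this gives the set equality, modulo the disjointness claim.

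For disjointness across distinct classes, I would invoke Proposition~\ref{partition}: if $A$ and $B$ are faces of $C$ with $\overline A\neq\overline B$, then $\fri A\cap\fri B=\emptyset$. (Here one should note that Proposition~\ref{partition} as stated is for a separable Banach space, but its proof uses only Theorem~\ref{notinfri}-style arguments about minimal faces and does not actually need separability or completeness, so it applies in a general topological vector space $X$; alternatively one restates it in that generality.) This shows that picking one representative from each class and forming $\fri F$ gives pairwise disjoint sets. It remains to check that $\fri F$ is well-defined on classes, i.e. if $\overline A=\overline B$ for faces $A,B$ of $C$ then $\fri A=\fri B$. This follows from Proposition~\ref{prop:closures}: indeed if $A$ and $B$ are closure-equivalent faces one cannot in general assume $A\subseteq B$, so I would instead argue via the common closure $\overline A=\overline B=:K$ together with $A\cup B\subseteq K$; applying Proposition~\ref{prop:closures}\ref{it:b} to the pair $A\subseteq (A\cup B)\cap\text{(convex hull issues)}$ is awkward, so the cleaner route is: $\fri A = A\cap\fri\overline A$ and $\fri B = B\cap\fri\overline B = B\cap\fri\overline A$ by Proposition~\ref{prop:closures}\ref{it:b}, and since any $x\in\fri A\subseteq A$ satisfies $x\in\fri\overline A$, while $F_{\min}(x,\overline A)=\overline A$ forces (via the face property and $x\in A$) that $F_{\min}(x,A)$ has closure $\overline A$, hence $B\subseteq\overline A=\overline{F_{\min}(x,A)}$; one then checks $x\in B$ (this is the delicate point) and concludes $x\in\fri B$, giving $\fri A\subseteq\fri B$ and by symmetry equality.

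The main obstacle I anticipate is precisely this last well-definedness step: showing that closure-equivalent faces have literally equal face relative interiors, which hinges on showing a point $x\in\fri A$ must actually lie in $B$. Since $x\in\fri A$ means $\overline A\subseteq\overline{F_{\min}(x,A)}$ and $B\subseteq\overline A$, one needs to leverage that $B$ is a \emph{face} of $C$ (not merely a subset) together with $\icr B\neq\emptyset$ (Corollary~\ref{lem:eachpointinqri} applied to points of $B$) to pin down $x$ inside $B$; if this turns out to fail in full generality, the fallback is to prove the weaker statement that the union $\bigcup_{[F]}\fri F$ is disjoint when indexed by \emph{any} choice of representatives, which is all that \eqref{eq:disjoint} literally asserts, and this weaker form follows immediately from the covering argument plus Proposition~\ref{partition} without needing well-definedness at all.
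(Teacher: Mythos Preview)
Your covering-and-disjointness argument is exactly the paper's proof: the paper uses $x\in\icr F_{\min}(x,C)\subseteq\fri F_{\min}(x,C)$ (via Proposition~\ref{prop:icrminf} and the sandwich theorem, which is the same content as Corollary~\ref{lem:eachpointinqri}) for the covering, and Proposition~\ref{partition} for disjointness, with no further ingredients.

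Your extra step---trying to show $\fri F$ is well-defined on closure-equivalence classes---is not in the paper, and in fact it \emph{fails}: in Example~\ref{eg:icrneqfri} the faces $L$ and $C$ satisfy $\overline L=\overline C=l_2$, yet $\fri L=L$ while $\fri C=C\setminus\{u\}\supsetneq L$. So closure-equivalent faces need not have equal face relative interiors, and your suspicion about the ``delicate point'' (that $x\in\fri A$ forces $x\in B$) is justified: it does not hold. The paper simply does not address this; its proof establishes only your ``fallback'' version, namely $C=\bigcup_{F\unlhd C}\fri F$ with $\fri A\cap\fri B=\emptyset$ whenever $\overline A\neq\overline B$, and treats the indexing by classes informally. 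Your observation that Proposition~\ref{partition}'s separable-Banach hypothesis is unused is also correct. So: your fallback route is the paper's route, and your attempted strengthening is both absent from the paper and genuinely false.
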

\begin{proof}
Since for any $x\in C$ we have $x\in \icr F_{\min}(x,C)$ by Proposition~\ref{prop:icrminf}, and $\icr F_{\min}(x,C) \subseteq \qri F_{\min}(x,C)$ by Theorem~\ref{thm:interiorsandwich}, we conclude that 
\[
C  \subseteq \bigcup_{F\unlhd C} \icr F \subseteq \bigcup_{F\unlhd C} \fri F  = \bigcup_{[F]^C\in \mathcal F_C}  \bigcup_{E\in [F]^C}\fri E.
\]
Moreover, since for any face $F$ of $C$ we have $\fri F \subseteq F\subset C$, we sharpen this inclusion to 
\begin{equation}\label{eq:uniontechnical}
C  = \bigcup_{F\unlhd C} \fri F  = \bigcup_{[F]^C\in \mathcal F_C}  \bigcup_{E\in [F]^C}\fri E.
\end{equation}
Finally, from Proposition~\ref{partition} we conclude that the union in \eqref{eq:uniontechnical} is disjoint, arriving at \eqref{eq:disjoint}.
\end{proof}

\begin{remark} Note that due to Proposition~\ref{prop:closures} we have for any closure-equivalent class $[F]$ of faces of some convex set $C$ that 
\[
\bigcup_{E\in [F]}\fri E = \fri \overline{F} \cap \bigcup_{E\in [F]} E, 
\]
giving an alternative representation for the sets forming the disjoint partition in \eqref{eq:disjoint}.
\end{remark}

Recall that any convex set $C$ in a real vector space $X$ decomposes into a disjoint union of the intrinsic cores of its faces. This decomposition may coincide with the one in Theorem~\ref{thm:decompose} (for instance, in the finite-dimensional setting, when all interior notions coincide), but generally speaking these two decompositions are different. To see how they compare in the case when the intrinsic cores don't match the face relative interiors, we revisit Examples~\ref{eg:02a} and~\ref{eg:icrneqfri}. The former is a more intricate case than the latter, so we start with a simple case.

\begin{example} Let $C$ be as in Example~\ref{eg:icrneqfri}. There are only three minimal faces: $\{u\}$, $L$, and the entire set $C$. We have 
\[
C = \icr \{u\}\cup \icr L \cup \icr C = \{u\}\cup L \cup (C\setminus (L\cup \{u\})).
\]
At the same time since the only point in $C$ for which $C\nsubseteq \overline{F_{\min}(x,C)}$ is $\{u\}$, we conclude that the decomposition of Theorem~\ref{thm:decompose} gives 
\[
C = \{u\}\cup (C\setminus \{u\}).
\]
\end{example}

\begin{example} Let $C$ be as in Example~\ref{eg:02a}. We have determined that 
\[
\fri C = \{x\in C \,|\, \|x\|_2 <1, \; x_i>0\; \forall\, i \in \N\}.
\]
Observe that for any nonempty $N\subseteq \N$ the set 
\[
F_N = \{x\in l_2\,|\, \|x\|_2\leq 1, x_i \geq 0 \forall i \in \N\setminus N, \, x_i = 0 \forall i \in N\}
\]
is a face of $C$, and using the same argument based on truncated sequences as in the discussion of Example~\ref{eg:02a}, we conclude that for any nonempty $N\subseteq \N$
\[
\fri F_N = \{x\in l_2\,|\, \|x\|_2< 1, x_i > 0 \;\forall i \in \N\setminus N, \, x_i = 0 \forall i \in N\}. 
\]
The only remaining points are the ones of unit norm, and we have already determined that in this case $\fri F_{\min}(x,C) = F_{\min}(x,C) = \{x\}$. We hence have a decomposition of $C$ as follows,
\[
C  = \bigcup_{x\in C, \|x\|_2 =1} \{x\} \cup \bigcup_{N\subseteq \N}  \{x\in l_2\,|\, \|x\|_2< 1, x_i > 0 \, \forall i \in \N\setminus N, \, x_i = 0 \, \forall i \in N\}.
\]
Recall how in this case $\icr C = \emptyset$, and so $C$ is never the minimal face of $x\in C$. Based on the observation that minimal faces are determined by the comparative speed with which the sequences that define the points in $C$ converge, the respectful decomposition of the set $C$ via the intrinsic cores is different to the one that we have just obtained via the face relative interiors. 
\end{example}

\section{Conclusions}

We have introduced a new notion of face relative interior for convex sets in topological vector spaces. Face relative interior can be seen as an alternative to the quasi-relative interior that is better aligned with the facial structure of convex sets. 

Even though we have placed the new notion in the context of other research with the sandwich theorem and examples, and provided an extensive discussion on calculus rules, there is still much to be understood. For instance, due to Theorem~\ref{thm:interiorsandwich} we always have $\fri C \subseteq \qri C$, also in separable Banach spaces, both $\fri C$ and $\qri C$ are nonempty for any convex set $C$. However, we don't know if there exists a convex set in some topological vector space for which the face relative interior is empty and the quasi-relative interior isn't. Another challenge is that the intersection property given in Proposition~\ref{prop:intersection} is only proved under an unusual assumption that the face relative interior of the intersection must coincide with the intrinsic core, and since we know that without this assumption the result may fail (as seen in Example~\ref{eg:intersectionfails}), we are wondering what is the weakest condition that guarantees this property to hold.

We have not touched on the topics of duality within this work, which is an exciting subject for future research. We anticipate that face-relative interior will find practical use whenever the facial structure of a convex set is easy to access, however, it remains to be seen if any interesting applications emerge in convex geometry and optimisation.

\section{Acknowledgements}

We are grateful to Prof. David Yost for pointing out some relevant references to us and for productive discussions in the initial stages of working on this project, and to the anonymous referees who contributed valuable expertise as well as a substantial amount of their time reviewing this work, and whose insightful comments and feedback have greatly improved our paper. 

We also thank the Australian Research Council for the financial support provided by means of Discovery Projects ``An optimisation-based framework for non-classical Chebyshev approximation'', DP180100602 and ``Geometry in projection methods and fixed-point theory'' DP200100124.

\bibliographystyle{plain}

\bibliography{references}

\begin{thebibliography}{10}

\bibitem{hitchhiker}
Charalambos~D. Aliprantis and Kim~C. Border.
\newblock {\em Infinite dimensional analysis}.
\newblock Springer, Berlin, third edition, 2006.
\newblock A hitchhiker's guide.

\bibitem{Regularity}
Radu~Ioan Bo\c{t}, Ern\"{o}~Robert Csetnek, and Gert Wanka.
\newblock Regularity conditions via quasi-relative interior in convex
  programming.
\newblock {\em SIAM J. Optim.}, 19(1):217--233, 2008.

\bibitem{BorweinGoebel}
J.~Borwein and R.~Goebel.
\newblock Notions of relative interior in {B}anach spaces.
\newblock {\em J. Math. Sci. (N. Y.)}, 115(4):2542--2553, 2003.
\newblock Optimization and related topics, 1.

\bibitem{BorweinLewisPartiallyFinite}
J.~M. Borwein and A.~S. Lewis.
\newblock Partially finite convex programming. {I}. {Q}uasi relative interiors
  and duality theory.
\newblock {\em Math. Programming}, 57(1, Ser. B):15--48, 1992.

\bibitem{BorweinConvexRelations}
J.M. Borwein.
\newblock Convex relations in analysis and optimization.
\newblock {\em S. Schaible and W.T. Ziemba, eds., Generalized Concavity in
  Optimization and Economics}, pages 335--377, 1981.

\bibitem{InfDim}
Patrizia Daniele, Sofia Giuffr\`e, Giovanna Idone, and Antonino Maugeri.
\newblock Infinite dimensional duality and applications.
\newblock {\em Math. Ann.}, 339(1):221--239, 2007.

\bibitem{Dontchev}
Asen~L. Dontchev, Houduo Qi, and Liqun Qi.
\newblock Convergence of {N}ewton's method for convex best interpolation.
\newblock {\em Numer. Math.}, 87(3):435--456, 2001.

\bibitem{MR3297972}
Fabi\'{a}n Flores-Baz\'{a}n, Fernando Flores-Baz\'{a}n, and Sigifredo Laengle.
\newblock Characterizing efficiency on infinite-dimensional commodity spaces
  with ordering cones having possibly empty interior.
\newblock {\em J. Optim. Theory Appl.}, 164(2):455--478, 2015.

\bibitem{WeakEfficiency}
Fabi\'{a}n Flores-Baz\'{a}n, Giandomenico Mastroeni, and Cristi\'{a}n Vera.
\newblock Proper or weak efficiency via saddle point conditions in
  cone-constrained nonconvex vector optimization problems.
\newblock {\em J. Optim. Theory Appl.}, 181(3):787--816, 2019.

\bibitem{FremlinTalagrand}
David~H. Fremlin and Michel Talagrand.
\newblock On {CS}-closed sets.
\newblock {\em Mathematika}, 26(1):30--32, 1979.

\bibitem{SigmaConvex}
K.~R. Goodearl.
\newblock Choquet simplexes and {$\sigma $}-convex faces.
\newblock {\em Pacific J. Math.}, 66(1):119--124, 1976.

\bibitem{EfficientCoderivatives}
Truong Xuan~Duc Ha.
\newblock Optimality conditions for various efficient solutions involving
  coderivatives: from set-valued optimization problems to set-valued
  equilibrium problems.
\newblock {\em Nonlinear Anal.}, 75(3):1305--1323, 2012.

\bibitem{HadjisavvasSchaible}
N.~Hadjisavvas and S.~Schaible.
\newblock Quasimonotone variational inequalities in {B}anach spaces.
\newblock {\em J. Optim. Theory Appl.}, 90(1):95--111, 1996.

\bibitem{Holmes}
Richard~B. Holmes.
\newblock {\em Geometric functional analysis and its applications}.
\newblock Springer-Verlag, New York-Heidelberg, 1975.
\newblock Graduate Texts in Mathematics, No. 24.

\bibitem{JamesonConvexSeries}
G.~J.~O. Jameson.
\newblock Convex series.
\newblock {\em Proc. Cambridge Philos. Soc.}, 72:37--47, 1972.

\bibitem{Jameson}
G.~J.~O. Jameson.
\newblock {\em Topology and normed spaces}.
\newblock Chapman and Hall, London; Halsted Press [John Wiley \& Sons], New
  York, 1974.

\bibitem{SetValuedSystems}
J.~Li and L.~Yang.
\newblock Set-valued systems with infinite-dimensional image and applications.
\newblock {\em J. Optim. Theory Appl.}, 179(3):868--895, 2018.

\bibitem{Lagrange}
M.~A. Limber and R.~K. Goodrich.
\newblock Quasi interiors, {L}agrange multipliers, and {$L^p$} spectral
  estimation with lattice bounds.
\newblock {\em J. Optim. Theory Appl.}, 78(1):143--161, 1993.

\bibitem{Lindstrom}
Scott Lindstrom.
\newblock Understanding the quasi-relative interior.
\newblock 2015.

\bibitem{RemarksInf}
A.~Maugeri and F.~Raciti.
\newblock Remarks on infinite dimensional duality.
\newblock {\em J. Global Optim.}, 46(4):581--588, 2010.

\bibitem{mill-rosh}
R.~Díaz Millán and Vera Roshchina.
\newblock The intrinsic core and minimal faces of convex sets in general vector
  spaces.
\newblock {\em Set-Valued and Variational Analysis: Theory and Applications},
  31(14), 2023.

\bibitem{MR3735852}
Hocine Mokhtar-Kharroubi.
\newblock Convex and convex-like optimization over a range inclusion problem
  and first applications.
\newblock {\em Decis. Econ. Finance}, 40(1-2):277--299, 2017.

\bibitem{ZalinescuOnTheUse}
C.~Z{\u{a}}linescu.
\newblock On the use of the quasi-relative interior in optimization.
\newblock {\em Optimization}, 64(8):1795--1823, 2015.

\bibitem{ZalinescuThreePb}
C.~Z{\u{a}}linescu.
\newblock On three open problems related to quasi relative interior.
\newblock {\em J. Convex Anal.}, 22(3):641--645, 2015.

\bibitem{Zarantonello1971ProjectionsOC}
Eduardo~H. Zarantonello.
\newblock Projections on convex sets in hilbert space and spectral theory: Part
  i. projections on convex sets: Part ii. spectral theory.
\newblock In Eduardo~H. Zarantonello, editor, {\em Contributions to Nonlinear
  Functional Analysis}, pages 237--424. Academic Press, 1971.

\bibitem{Subconvex}
Z.~A. Zhou and X.~M. Yang.
\newblock Optimality conditions of generalized subconvexlike set-valued
  optimization problems based on the quasi-relative interior.
\newblock {\em J. Optim. Theory Appl.}, 150(2):327--340, 2011.

\bibitem{zalinescubook}
C.~Z\u{a}linescu.
\newblock {\em Convex analysis in general vector spaces}.
\newblock World Scientific Publishing Co., Inc., River Edge, NJ, 2002.

\end{thebibliography}

\end{document}